\newcommand{\R}{\mathbb{R}}
\newcommand{\N}{\mathbb{N}}
\newtheorem{theorem}{Theorem}
\newtheorem{proposition}{Proposition}
\newtheorem{lemma}{Lemma}
\newtheorem{definition}{Definition}
\newtheorem{remark}{Remark}
\newtheorem{example}{Example}
\title{Duality Perspective on Nonlinear Eigenproblems}
\author[1]{Jonathan Laubmann}
\author[2]{Manuel Friedrich}
\author[1]{Daniel Tenbrinck}
\affil[1]{Department of Data Science, Friedrich-Alexander-Universität Erlangen-Nürnberg}
\affil[2]{Institut für Analysis, Johannes Kepler Universität Linz}
\begin{document}
\hyphenation{ei-gen-func-tion ei-gen-prob-lem for-mu-la-tion dis-cre-ti-za-tions dis-cre-ti-za-tion ap-prox-i-ma-ted sub-dif-fe-ren-tial sta-ting dif-fer-en-tia-ble ho-mo-ge-nei-ty ei-gen-vec-tors ei-gen-vec-tor opera-tor mo-no-to-nously}
\maketitle
\begin{abstract}
We investigate nonlinear eigenproblems for a broad class of proper, closed, convex functionals in reflexive Banach spaces. We develop a dual formulation of the nonlinear eigenproblem using the Fenchel conjugate and establish an equivalence to the primal problem. Further, we introduce a duality gap and a geometric characterization of eigenvectors that apply in general Banach spaces. We interpret the dual problem as the eigenproblem for the inverse operator of the primal problem. Concerning numerical methods for solving nonlinear eigenproblems, we analyze the inverse power method, framed as a dual power method, showing strong convergence in the case of absolutely \(p\)-homogeneous functionals. Our theoretical results are validated by extensive numerical experiments for the \(p\)-Laplacian. We further connect the flow-based proximal power method from the literature to the inverse power method and discuss two numerical approaches to approximate higher-order nonlinear eigenfunctions. 
\end{abstract}
%
\tableofcontents
\section{Introduction}\label{sect:Intro}
Linear eigenproblems and their respective solutions have been studied for a long time, due to their relevance in many disciplines ranging from applications in signal processing to mechanics and physics.
Solutions to eigenproblems have many interesting use cases in data science and machine learning, e.g., unsupervised clustering \cite{spectral_clustering}, recommendation systems \cite{recommender_systems}, dimension reduction \cite{LLE_dim_reduction}, or functional mappings for shapes \cite{functional_mappings}.
In recent years, the concept of linear eigenproblems has been generalized to nonlinear eigenproblems allowing for new applications, for instance in image segmentation, denoising, or graph clustering \cite{Hein_Buehler, gilboa2018nonlinear, bungert2019computing}.
As applications in mathematics, we mention that optimal Poincaré or embedding constants can be seen as nonlinear eigenvalues \cite{HYND2017approxrq}.
Nonlinear and linear eigenproblems can be formulated in finite and infinite dimensions as well as on discrete structures like graphs \cite{Daniel_Graph}.
An abstract theory that covers both linear and nonlinear settings can help to unify approaches and formulate efficient numerical approaches for a wide range of applications.

Nonlinear eigenproblems aim to find an element \(u\) of a vector space \(X\) and a scalar \(\lambda\in \R\) such that for two possibly nonlinear functionals \(T, S\) the equation
\begin{equation}\label{eq:abstract_eigenproblem}
    T(u) = \lambda S(u)
\end{equation}
is fulfilled, see \cite{gilboa2018nonlinear}.
For a concrete realization of \eqref{eq:abstract_eigenproblem} see \Cref{ex:p-laplace}.

The goal of this paper is to analyze the nonlinear eigenproblem for sub\-differentials of a convex functional $J$ on a reflexive Banach space \(X\), equipped with a norm \(\|\cdot\|_X\). We use the class \(\Gamma_0(X)\) of proper, lower semi-continuous, and convex functionals and fix the functional \(J\in \Gamma_0(X)\) as an element of this class.
The nonlinear operator \(T \coloneqq \partial J\)   in \eqref{eq:abstract_eigenproblem} is defined as the possibly set-valued subdifferential operator of \(J\), where the elements of \(\partial J\) are in the dual space \(X^*\), equipped with the respective dual norm \(\|\cdot\|_*\).
Therefore, we fix the role of the second operator \(S\) in \eqref{eq:abstract_eigenproblem} as a suitable duality mapping \cite{Chidume2009dualitymaps}.
In particular, we choose \(H\in\Gamma_0(X)\) to be absolutely \(p\)-homogeneous, i.e., the \(p\)-th power of a semi-norm on \(X\) with \(1<p<\infty\).
(Note that \(H^\frac{1}{p}\) induces a semi-norm, see \Cref{lemma:phomog_seminorm}.)
Following a common setup for nonlinear eigenproblems \cite{HYND2017approxrq, Bungert2022GFburger}, without restriction we replace \(X\) with the quotient space \(X\setminus \mathcal{N}(H)\), see \cite[Appendix A]{Bungert2020asympt}, which allows us to work with real norms.
In particular, this results in \(H\) having a trivial null space.
In this setting, the choice of \(H\) is naturally the $p$-th power of a standard isotropic norm on \(X\).
We define this norm as \(|\cdot|_H\coloneqq (pH)^\frac{1}{p}(\cdot)\), thus we write \(H(u) = \frac{1}{p}|u|^p_H\).
The duality mapping \(S\) is  naturally chosen as the duality mapping with gauge \(\phi(t) = t^{p-1}\) and the norm \(|\cdot|_H\), namely the subdifferential of \(H\) \cite{Chidume2009dualitymaps}.
With the above definitions, we can now formulate the nonlinear eigenproblem investigated in this work.
\begin{definition}[\(p\)-eigenvector]\label{def:eigenproblem}
We call \(u\in X\setminus\{0\}\) a \(p\)-eigenvector of \(J\) with subgradient \(\zeta\in  \partial J(u) \subset X^*\), duality map \(\partial H\), and eigenvalue \(\frac{J(u)}{H(u)} = \lambda\in\R\) if it fulfills the inclusion
\begin{equation}\label{eq:eigenproblem}
\zeta - \lambda\, \partial H(u) \ni0\,.
\end{equation}
\end{definition}
We will omit the dependency on \(H\) if it is clear from the context and simply state that \(u\) is an eigenvector of \(J\) if \(u\) satisfies \Cref{def:eigenproblem}.
\begin{example}\label{ex:p-laplace}
    A popular example that is covered by our formulation in \eqref{def:eigenproblem} is the \(p\)-Laplace eigenproblem for \(1<p<\infty\) with \(J\) being the \(p\)-Dirichlet energy, \(\frac{1}{p}\int_\Omega |\nabla u|^p\,dx\), the isotropic norm is chosen as the \(L^p\)-norm and \(H(u) = \frac{1}{p}\|u\|_{L^p(\Omega)}^p\) on \(X=W^{1,p}_0(\Omega)\). This leads to an eigenproblem in form of a partial differential equation (PDE), namely
    \begin{equation}\label{eq:p-laplace}
        -\Delta_p u - \lambda |u|^{p-2}u = 0\,.
    \end{equation}
    It is well known that for eigenvectors of absolutely \(p\)-homogeneous functionals, such as the \(p\)-Dirichlet energy, the equality \( J(u) = \lambda H(u)\) holds, see  e.g.\ \cite[Proposition 2.5]{BungertPHD}.
\end{example}
\begin{example}\label{ex:spd}
The simplest example covered by our formulation in \Cref{def:eigenproblem} is the linear eigenproblem for symmetric positive definite matrices $A$.
    The quadratic functionals \(J(u) = \frac{1}{2}\langle Au, u\rangle\) and \(H(u) = \frac{1}{2}\|u\|_2^2\) with the Euclidean norm \(\|\cdot\|_2\) and scalar product $\langle \cdot, \cdot \rangle$ in \(\R^n\) yield the linear eigenproblem
\begin{equation*}
    Au - \lambda u=0\,.
\end{equation*}
\end{example}

An important concept that is closely related to both linear and nonlinear eigenproblems is the so-called Rayleigh quotient (RQ).
\begin{definition}[Rayleigh quotient]\label{def:RQ}
    We call the mapping \(R:X\setminus\{0\}\to\R\cup\{\pm\infty\}\) defined by
    \begin{equation}\label{eq:RQ}\tag{RQ}
        R(u) = \frac{J(u)}{H(u)}\,
    \end{equation}
    the Rayleigh quotient of \(J\).
\end{definition}
The nonlinear RQ is a direct generalization of the linear RQ \[R(u) = \frac{\langle Au, u\rangle}{\langle u,u\rangle\,},\] 
where $A$ is chosen as in \Cref{ex:spd}. We can interpret the nonlinear eigenproblem in \Cref{def:eigenproblem} as the critical point problem for the nonlinear RQ \cite{Hein_Buehler}.
To illustrate that eigenvectors can be seen as critical vectors of the RQ, we assume for the sake of simplicity that the RQ is differentiable. Then, by using the quotient rule, for $v \in X$ we have the first variation
\begin{equation}\label{eq:first_variation_rq}
    \frac{d}{dt}R(u+vt)|_{t=0} = \frac{1}{H(u)}\left(\langle \partial J(u) - R(u)\partial H(u),v\rangle \right)\,,
\end{equation}
where \(\langle\cdot,\cdot\rangle:X^*\times X\to\R\) denotes the dual pairing of elements in \(X^*\) and \(X\).
We see that the term in \eqref{eq:first_variation_rq} is zero for all \(v\in X\) if and only if \(u\) fulfills the \(p\)-eigenvector condition \eqref{eq:eigenproblem} with \(R(u)=\lambda\).

Throughout this paper we pose a growth assumption on the functionals \(J\) and \(H\), which can be seen as a coercivity assumption on \(J\) with respect to the norm \(|\cdot|_H\). We assume that there exists a constant \(c>0\) such that
\begin{equation}\label{eq:growth_coercivity}
    H(u) \leq c\,J(u)\quad \forall u\in X\,.
\end{equation}
The growth assumption ensures the nonnegativity of \(J\) and boundedness of the RQ away from zero. In the following, we will mainly focus on the special case of \(J\) being absolutely \(p\)-homogeneous.
As explained  in \Cref{lemma:phomog_seminorm}, this class of functionals is given as the power of semi-norms, which appear, e.g., as regularization terms in denoising tasks \cite{Benninger_Burger_GS_SV}. 
In this case,  without further notice we set \(\mathcal{N}(J) = \{0\}\) by replacing $X$ by $X\setminus \mathcal{N}(J)$ as otherwise \eqref{eq:growth_coercivity} would be immediately violated.
Besides boundedness of the RQ from below, a further  consequence of \eqref{eq:growth_coercivity} is that eigenvalues of \(J\) are always positive, because they are determined by the RQ value of the respective eigenvector.
The global minimizer of the RQ, if it exists, is called the \emph{ground state} \(u_* \coloneqq {{\rm arg\, min}_{u\in X\setminus\{0\}}} \frac{J(u)}{H(u)}\).
The ground state is an eigenvector with eigenvalue \(\lambda_* = R(u_*)\), even in the non-differentiable setting \cite{Bungert2022GFburger}.
The ground state's respective eigenvalue is the reciprocal optimal growth constant in \eqref{eq:growth_coercivity}.
\subsection*{Related work and contributions of the paper}
Nonlinear eigenproblems have gained research interest in recent years.
In the following we give a brief overview of related works in this field. 
Theoretical analysis of nonlinear eigenproblems has been performed in various recent works, see, e.g., \cite{BungertPHD, Bungert2022GFburger, Bungert_Korolev_infty, bungert2025introductionnonlinearspectralanalysis, deidda2024graphinftylaplacianeigenvalueproblem, deidda2025nonlinearspectralgraphtheory, Ercole2018abstract_NonLinEigValPb, gilboa2018nonlinear, HYND2017approxrq, Juutinen1999}.
Asymptotic profiles of gradient flows were shown to satisfy the nonlinear eigenvector equation \eqref{eq:eigenproblem}, and thus were analyzed in the context of nonlinear eigenproblems in \cite{Bungert2020asympt, Bungert2021spectrdec, HYND2017approxrq, Varvaruca2004evoequ}.
Dual formulations of the nonlinear eigenproblem can be found in \cite{Bungert_Korolev_infty, tudisco2022nonlinearspectralduality}.
Notes on the \(p\)-Laplacian, a mean value approximation, and its formulation on graphs are given in \cite{Lindqvist_notesonplpleq, Teso2021meanvalue, delTeso2021, Daniel_Graph}.

On the other hand, numerous works investigated numerical schemes to ap\-prox\-i\-mate nonlinear eigenfunctions in finite dimensions.
Approaches for the computation of nonlinear eigenfunctions for specific problems can be found in \cite{Bozorgnia_2024, Bungert2021NNpowermethod, ge2025computingplaplacianeigenpairssigned, Lanza2024peigendec, Li_2025}.
One can divide these approaches into two different paradigms, namely \emph{flow-based methods} \cite{Aujol2018_onehomogflow, Bungert2022GFburger, bungert2019computing, COHEN2018Gilboa_flow, Feld_2019, Gilboa2021iterativemeth, NossekG16flowgenerating} and \emph{variants of (inverse) power methods} \cite{bobkov2025inverseiterationmethodhigher, Farid_B_plpl, Farid_B_2ndplpl, Hein_Buehler, shao2024simpleinversepowermethod}.

In the following, we state the main contribution of our work and discuss how the proposed approach compares to recent work in the field.
In the first part of this paper, we formulate a nonlinear eigenproblem for the broad class of proper, closed, convex functionals in reflexive Banach spaces. The majority of related works has additional assumptions on the investigated functional or restricts the problem to Hilbert spaces or even finite dimensions. To the best of our knowledge, so far the problem has been analyzed in a general setting only in a seminal paper by Bungert and Burger  \cite{Bungert2022GFburger}. Yet, our formulation differs from the one by Bungert and Burger \cite{Bungert2022GFburger}. More precisely,  \cite{Bungert2022GFburger}  regards eigenvectors as tuples \((u, \hat{u})\in X\times \operatorname{arg\,min}J\) and respective eigenvalues \(\lambda\in \R\), and addresses the problem
\[\partial J(u) - \lambda\partial H(u - \hat{u})\ni 0\ \quad \text{with}\quad R(u) = \frac{J(u)-J(\hat{u})}{H(u-\hat{u})},\]
with $u \neq \hat{u}$,
where the shifting by $\hat{u}$ avoids minimizers of \(J\) to be trivial ground states, by maximizing over all \(\bar{u}\in \operatorname{arg\, min}J\) in the ground state problem. In our setting, instead, we work under the coercivity assumption \eqref{eq:growth_coercivity} which yields a positive RQ and, in turn, allows only positive eigenvalues since they are given as the RQ value of the eigenvector. Note that the two formulations coincide in the case that \(J\) is absolutely \(p\)-homogeneous, because \(\operatorname{arg\,min}J = \mathcal{N}(J)\).

We introduce a dual formulation of the nonlinear eigenproblem based on the well-known Fenchel conjugate and show equivalence of the primal and dual problem. More precisely, we confirm that \(u \in X\) is a \(p\)-eigenvector of \(J\) with eigenvalue \(\lambda\) and subgradient \(\zeta \in X^*\) if and only if \(\zeta\) is a dual eigenvector of \(J^*\) with eigenvalue \(\lambda^{1-q}\) and subgradient \(u\in X\), where \(q = \frac{p}{p-1}\) is the corresponding  Hölder exponent.
In  \cite{tudisco2022nonlinearspectralduality},  Tudisco and Zhang  analyze the effect of duality transformations on the spectrum of positively \(p\)-homogeneous functionals in finite dimensions. They also establish a relation between Fenchel duality for a nonnegative \(p\)-homogeneous function \(J\) and norm duality in the sense of $\Gamma$-convergence in the limit \(p\to 1\). We are extending their work on the Fenchel transformation to the class of proper, closed, convex functionals on reflexive Banach spaces.
Bungert and Korolev \cite{Bungert_Korolev_infty} analyze the eigenproblem for \(p=1\) and \(J\) as a norm on a Banach space \(X\). In particular, they formulate a dual problem using norm duality.
 
We introduce a formulation which we denote as dual Rayleigh quotient and show that it is bounded from above if and only if the primal RQ is bounded from below by a positive constant.
A comparable result can be found in the work of Bungert and Korolev \cite{Bungert_Korolev_infty}, for the special case of \(J\) being a norm.
In particular, we demonstrate that the ground state problem, i.e., minimizing the RQ in \eqref{eq:RQ}, is equivalent to maximizing a dual RQ.
Furthermore, we introduce a so-called duality gap as a novel metric for the computation of nonlinear eigenvectors.
We can show that the roots of the duality gap are in one-to-one correspondence with a primal-dual eigenvector pair.
We also interpret the dual problem as the eigenproblem of the inverse operator.
 
Furthermore, we introduce a geometric characterization of eigenvectors $u$, stating that they are given as vectors on a sphere with radius chosen as their respective norm \(|u|_H\), for which the subgradient $\zeta$ of \(J\) is normal to this sphere at $u$ and has length $ |\zeta|_{H^*} = \frac{pJ(u)}{|u|_{H}}$.
This characteristic applies to a broader class of functionals and also holds in a more general Banach space than the previous characteristic given by Bungert et al.\ in \cite{BungertPHD, Bungert2021spectrdec}.
Their characterization is given in Hilbert spaces for eigenvectors of semi-norms and states that eigenvectors \(\zeta\in\partial J(\zeta)\)
in the dual unit ball of \(J\) are precisely
those vectors in the dual unit ball which possess an orthogonal hyperplane supporting this dual unit ball in \(\zeta\).
Our characterization of nonlinear eigenfunction is based on the cosine similarity measure for the norm \(|\cdot|_H\) and will be used as a numerical metric to show convergence for our iterative schemes and as an objective function in numerical optimization.

In the second part of this work, where we discuss numerical methods to compute nonlinear eigenfunctions, we restrict ourselves to absolutely \(p\)-homo\-ge\-neous functionals.
We formulate the inverse power method as a dual power method and show strong convergence for a subsequence of the iteration sequence towards nonlinear eigenfunctions.
The inverse power method for the general class of convex and absolutely \(p\)-homogeneous functionals in Banach spaces was in\-vest\-i\-gated by Ercole  \cite{Ercole2018abstract_NonLinEigValPb}.
Our analysis uses the dual formulation of the problem as we see the inverse power method as a dual power method.
We show that the proximal power method by Bungert and Burger \cite{Bungert2022GFburger}, a flow based method, falls under the broad class of inverse power methods in our reflexive Banach space setting.

Finally, we discuss two numerical approaches to approximate higher-order nonlinear eigenfunctions.
The first approach was proposed by Bobkov and Galimov for the computation of higher-order  eigenfunctions of the \(p\)-Laplacian \cite{bobkov2025inverseiterationmethodhigher}.
In our numerical experiments, we show that the plateauing effect of their scheme, which the authors reported in their work, is due to the discretization and inaccuracy of the underlying finite element scheme and can be avoided by using the del Teso and Lindgren finite difference solver \cite{delTeso2021}.
Secondly, we propose a new method for the computation of higher-order eigenfunctions which is not restricted to the \(p\)-Laplacian and can be used for other eigenproblems as well.
For this, we are using a target function based on our geometric characterization instead of the RQ.  With this approach, we are not dependent on characteristics of specific eigenfunctions, like being non-sign changing or that the RQ of the positive and negative part of the eigenfunction is equal, but solely on the initial datum. 
Our approach  keeps characteristics of the initial datum and thus enables us to converge towards a `closer' eigenfunction in terms of the cosine similarity and thus to higher eigenfunctions as well.
\subsection*{Structure of this work}
Our paper is structured as follows.
We finish \Cref{sect:Intro} by introducing the necessary mathematical notation for our analysis.
In \Cref{sect:DualProblem} we introduce an abstract duality theory for nonlinear eigenproblems and state our main results, namely the equivalence of the primal and dual problem, the geometric characterization of \(p\)-eigenvectors, and the proof that eigenvectors are roots of the duality gap.
In \Cref{sect:GFandIPM} we focus on  numerical approximation schemes for nonlinear eigenfunctions.
We connect the gradient flow based approach by Bungert and Burger to the inverse power method and prove convergence of the inverse power method.
In \Cref{sect:Numeric} we empirically verify the convergence of the inverse power method and our theoretical results.
Additionally, we discuss approaches to compute higher-order eigenfunctions and explain the plateauing behavior of the Bobkov and Galimov scheme \cite{bobkov2025inverseiterationmethodhigher}.
We end the section on the computation of nonlinear eigenfunctions by introducing and empirically testing an approach based on the geometric characterization of nonlinear eigenvectors, which aims to compute higher-order eigenfunctions.
We conclude this work with a discussion of our main results and open research questions in \Cref{sect:Conclusion}.
\subsection*{Notation and prerequisites from convex a\-nal\-y\-sis} \label{subsection:math_notation}

As before, by $X$ we denote a reflexive Banach space, and  by $\Gamma_0(X)$ we denote the class of  proper, lower semi-continuous,
and convex functionals. Our notation mainly follows the terminology in   standard textbooks on convex analysis \cite{Rockafellar1970convana, Ekeland99convana}. The effective domain of \(J\) and the null-space of \(J\) are defined as
\begin{align*}
    \operatorname{dom}(J) &\coloneqq \{u\in X\mid J(u)<\infty\}\,,\\
    \mathcal{N}(J) &\coloneqq \{u\in X\mid J(u)=0\}\,.
\end{align*}
The functional \(J\in\Gamma_0(X)\) is continuous on its effective domain \cite{Ekeland99convana}.
\begin{definition}\label{def:Fenchel_conj}
    The Fenchel conjugate \(J^*\colon X^*\to\R\) of   \(J \in \Gamma_0(X)\) is given by \[J^*(\zeta) = \sup_{u\in X}\langle \zeta, u\rangle - J(u)\,.\]
\end{definition}
The Fenchel conjugate is sometimes called convex conjugate or polar function. We have that \(J^*\in \Gamma_0(X^*)\) for \(J\in \Gamma_0(X)\).
\begin{definition}
    The subdifferential of a convex functional \(J\) is given by the set-valued mapping \(\partial J:X\rightrightarrows X^*\)
    \[\partial J(u) = \left\{\zeta\in X^*\mid \langle\zeta, v-u\rangle + J(u)\leq J(v)\quad \forall v\in X\right\}.\]
\end{definition}
Note that its elements, called subgradients, are elements in the dual space. A useful relationship is the Fenchel-Young identity
\begin{equation}\label{eq:Fenchel_Young_eq}
    \zeta\in \partial J(u)\: \Leftrightarrow \: u\in\partial J^*(\zeta) \: \Leftrightarrow \: \langle\zeta,u\rangle \, = \, J(u) + J^*(\zeta).
\end{equation}
Furthermore, for any \(u\in X\) and \(\zeta\in X^*\), the Fenchel-Young inequality  \(\langle\zeta,u\rangle \leq J(u) + J^*(\zeta)\) holds.

In this work, we focus mainly on absolutely \(p\)-homogeneous functionals in \(\Gamma_0(X)\).
We now summarize the most important properties of this class of functionals.
If \(H\) is absolutely \(p\)-homogeneous with \(p\geq 1\) and convex, then \(\operatorname{dom}(H)\) and \(\mathcal{N}(H)\) are linear subspaces.
If \(H\) is additionally lower semi-continuous, then \(\mathcal{N}(J)\) is closed \cite{Bungert2020asympt}. It is well known that absolutely one-homogeneous functionals in \(\Gamma_0(X)\) are semi-norms \cite{Bungert2021spectrdec, BungertPHD}. We generalize this fact by showing that an absolutely \(p\)-homogeneous functional in \(\Gamma_0(X)\) is the \(p\)-th power of a semi-norm.
\begin{lemma}\label{lemma:phomog_seminorm}
    Let \(J \in \Gamma_0(X)\) be absolutely \(p\)-homogeneous. Then, \(J\) is the \(p\)-th power of a semi-norm.
\end{lemma}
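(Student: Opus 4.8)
The plan is to verify that $N \coloneqq J^{1/p}$ satisfies the three defining properties of a semi-norm: nonnegativity, absolute one-homogeneity, and subadditivity. The first two I expect to be routine. For nonnegativity, note $J(0) = J(0\cdot u) = 0^p J(u) = 0$ for any $u \in \operatorname{dom}(J) \neq \emptyset$, and then for general $u \in X$ convexity together with $J(-u) = |{-1}|^p J(u) = J(u)$ gives $0 = J(0) = J\bigl(\tfrac12 u + \tfrac12(-u)\bigr) \le \tfrac12 J(u) + \tfrac12 J(-u) = J(u)$, so $N(u) = J(u)^{1/p} \ge 0$ is well defined in $[0,\infty]$. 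Absolute one-homogeneity is immediate from $p$-homogeneity: $N(\lambda u) = (|\lambda|^p J(u))^{1/p} = |\lambda|\,N(u)$ for every $\lambda \in \R$.

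The substantial step, and the one I expect to be the main obstacle, is the triangle inequality, i.e.\ $J(u+v)^{1/p} \le J(u)^{1/p} + J(v)^{1/p}$. I would prove it by combining convexity with homogeneity in a scaled convex combination: for any $t \in (0,1)$,
\begin{equation*}
    u + v = t\cdot\tfrac{u}{t} + (1-t)\cdot\tfrac{v}{1-t},
\end{equation*}
so that $J(u+v) \le t\,J(u/t) + (1-t)\,J(v/(1-t)) = t^{1-p}J(u) + (1-t)^{1-p}J(v)$, using convexity in the first step and $p$-homogeneity in the second. It then remains to choose $t$ optimally. Setting $a = J(u)^{1/p}$, $b = J(v)^{1/p}$ and taking $t = a/(a+b)$ whenever $a+b>0$ collapses the right-hand side to $a(a+b)^{p-1} + b(a+b)^{p-1} = (a+b)^p$, which is exactly the desired bound; and the degenerate case $a = b = 0$ follows directly, since $J(u+v) = 2^p J\bigl(\tfrac{u+v}{2}\bigr) \le 2^{p-1}\bigl(J(u)+J(v)\bigr) = 0$.

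Finally, I would record that, as already noted for convex absolutely $p$-homogeneous functionals, $\operatorname{dom}(J)$ is a linear subspace of $X$; hence $N$ is finite precisely on this subspace, on which it is a genuine semi-norm with $J = N^p$ (equivalently, $N$ is an extended-real-valued semi-norm on all of $X$). I expect no difficulty beyond the subadditivity argument — in particular, lower semi-continuity of $J$ is not used — the only real ingenuity being the scaled convex decomposition of $u+v$ and the optimal weight $t = J(u)^{1/p}/(J(u)^{1/p}+J(v)^{1/p})$.
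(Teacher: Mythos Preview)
Your argument is essentially the paper's: both express the relevant point as a convex combination with weights proportional to $J(u)^{1/p}$ and $J(v)^{1/p}$, then invoke convexity and $p$-homogeneity of $J$. The paper phrases this as proving convexity of $J^{1/p}$ rather than subadditivity, but for an absolutely one-homogeneous functional these are equivalent; your optimal choice $t = a/(a+b)$ is exactly the rescaling the paper performs when it normalizes to $u/J^{1/p}(u)$ and $v/J^{1/p}(v)$.

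One small oversight: your case split ``$a+b>0$'' versus ``$a=b=0$'' does not actually cover $a=0$, $b>0$ (or symmetrically), since then $t = a/(a+b) = 0 \notin (0,1)$ and the decomposition $u+v = t\cdot\tfrac{u}{t} + (1-t)\cdot\tfrac{v}{1-t}$ is undefined. The paper treats this boundary case separately via the translation invariance $J(w+w_0)=J(w)$ for $w_0\in\mathcal{N}(J)$. In your framework the fix is immediate: when $J(u)=0$, the bound $J(u+v)\le t^{1-p}J(u)+(1-t)^{1-p}J(v)=(1-t)^{1-p}J(v)$ still holds for every $t\in(0,1)$, and letting $t\to 0^+$ yields $J(u+v)\le J(v)$.
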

\begin{proof}
    When taking the \(p\)-th root of an absolutely \(p\)-homogeneous functional in \(\Gamma_0(X)\), we have an absolutely one-homogeneous, proper and lower semi-continuous functional. It remains to prove that the convexity is preserved by taking the \(p\)-th root, see \cite{tudisco2022nonlinearspectralduality}. We need to show that: \(J\) is convex \(\Rightarrow\) \(J^\frac{1}{p}\) is convex.
    We first discuss the most relevant case \(u, v\in \operatorname{dom}(J^\frac{1}{p})\setminus \mathcal{N}(J^\frac{1}{p})\), i.e., \(0<J(u), J(v) <\infty\).
   With \(\infty>C=tJ^\frac{1}{p}(u) + (1-t)J^\frac{1}{p}(v)>0\) for $t \in [0,1]$,  we calculate
    \begin{align*}
        \frac{J(tu + (1-t)v)}{C^p} &= J\left(\frac{tJ^\frac{1}{p}(u)u}{CJ^\frac{1}{p}(u)} + \frac{(1-t)J^\frac{1}{p}(v)v}{CJ^\frac{1}{p}(v)}\right)\\
        &\leq \frac{tJ^\frac{1}{p}(u)}{C}J\left(\frac{u}{J^\frac{1}{p}(u)}\right) +\frac{(1-t)J^\frac{1}{p}(v)}{C}J\left(\frac{v}{J^\frac{1}{p}(v)}\right)\\
        &= \frac{tJ^\frac{1}{p}(u)}{C} + \frac{(1-t)J^\frac{1}{p}(v)}{C} = 1\,.
    \end{align*}
   By taking the \(p\)-th root this proves that \(J^\frac{1}{p}\) is convex on \(\operatorname{dom}(J^\frac{1}{p})\setminus \mathcal{N}(J^\frac{1}{p})\).
   As $\mathcal{N}(J)$ is a linear subspace, the case \(u, v\in \mathcal{N}(J^\frac{1}{p})\) is easy, and  the case \(u\in X\setminus\operatorname{dom}(J^\frac{1}{p})\) is trivial. Without loss of generality, it remains to treat the case \(u\in \mathcal{N}(J^\frac{1}{p})\) and \(v\in \operatorname{dom}(J^\frac{1}{p})\).
    We use that \(\mathcal{N}(J)\) is a closed linear subspace and that \(J(w+w_0) = J(w)\) for all \(w_0\in \mathcal{N}(J)\) and all \(w\in X\), see \cite[Appendix]{Bungert2020asympt}.
    We calculate
    \[J(tu+(1-t)v) = J((1-t)v) = (1-t)^pJ(v)\,.\]
    Taking the \(p\)-th root and using \(J(u) = 0\), we receive
    \[J^\frac{1}{p}(tu+(1-t)v) = (1-t)J^\frac{1}{p}(v)  = (1-t)J^\frac{1}{p}(v)+ tJ^\frac{1}{p}(u)\,,\]
    which concludes the proof. 
\end{proof}
For  absolutely \(p\)-homogeneous functionals \(H\in\Gamma_0(X)\) we define the induced semi-norm from \Cref{lemma:phomog_seminorm} as \(|\cdot|_H := (pH)^{\frac{1}{p}}(\cdot)\). 
In this case, its Fenchel conjugate is the normalized power of its dual norm \[|\zeta|_{H^*} \coloneqq \sup_{u\in X\setminus\{0\}}\frac{\langle\zeta, u\rangle}{|u|_H}\] with Hölder exponent \(q = \frac{p}{p-1}\), written as \(H^*(\zeta) = \frac{1}{q}|\zeta|_{H^*}^q\).
For \(\zeta\in\partial H(u)\) it holds that 
\begin{align}\label{newequation}
|\zeta|_{H^*} = |u|^{p-1}_H.
\end{align}
 The subdifferential of \(H\) can be seen as a duality map \(\partial H\colon X\rightrightarrows X^*\) \cite{Chidume2009dualitymaps}.
For any absolutely \(p\)-homogeneous functional \(H\in\Gamma_0(X)\) the Euler identity
\begin{equation}\label{eq:euler_ident}
    pH(u) = \langle \zeta, u\rangle\quad \text{for} \quad \zeta\in \partial H(u)
\end{equation}
holds, see \cite{YANG2008euler}.
Note that the Euler identity does not imply that the subdifferential is single-valued, because for \(\zeta_1, \zeta_2\in \partial H(u)\) it holds \[0= pH(u) - pH(u) = \langle \zeta_1 - \zeta_2, u\rangle \centernot\Rightarrow\zeta_1=\zeta_2\,.\]
In fact, \(H\) is differentiable on its effective domain if and only if \(\partial H\) is single-valued \cite{Ekeland99convana}.
Additionally, we   see, by using its induced norm, that \(H\) is differentiable if \(|\cdot|_H\) is differentiable as composition of differentiable functionals.
However, the converse is not true, e.g., the absolute value in \(\R\) and its square.
The norm of a uniformly smooth Banach space is smooth, except at zero, e.g., any \(W^{k,p}\) norm with \(1<p<\infty\).
\begin{remark}
    The assumption that the space \(X\) is reflexive is not inherently necessary for the ideas presented in this work.
    Yet, the reflexivity of \(X\) is useful for the analysis in the sense that we do not need to specify whether we are working in the predual or the dual space.
    Furthermore, the dual transformation to the dual space and the transformation from the dual space to the primal space are equivalent on reflexive Banach spaces.
\end{remark}
\section{Duality of the nonlinear eigenproblem}\label{sect:DualProblem}
In this section, we present a duality theory for nonlinear eigenproblems based on the Fenchel conjugate.
Furthermore, we show the equivalence of the primal and dual problem in the sense that \(u \in X\) is a \(p\)-eigenvector of \(J\) with eigenvalue \(\lambda\) and subgradient \(\zeta \in X^*\) if and only if \(\zeta\) is a dual eigenvector of \(J^*\) with eigenvalue \(\lambda^{1-q}\) and subgradient \(u\in X\).
The duality theory introduced here gives rise to a geometric characterization of nonlinear eigenvectors, and we will interpret the dual problem as the eigenproblem of the respective inverse operator of the primal operator \(\partial J\).
For the sake of clarity, we note again that in our setting we assume \((X, \|\cdot\|_X)\) to be a reflexive Banach space, \(J\in \Gamma_0(X)\), and \(H\in \Gamma_0(X)\) to be absolutely \(p\)-homogeneous for \(p>1\).

\subsection{Dual problem}\label{subsect:dual_problem}
Duality in optimization often refers to the relationship between a minimization problem and its corresponding dual problem, which is usually a maximization problem.
We will show that this is the case for the ground-state eigenvector which minimizes the primal RQ, whereas its dual eigenvector maximizes the dual RQ.\\
We define the dual eigenproblem on the dual space using the Fenchel conjugate of the functionals \(H\) and \(J\) from \Cref{def:eigenproblem}.
In \cite{Bungert_Korolev_infty} the authors introduced a dual formulation for the \(1\)-homogeneous eigenproblem in Banach spaces based on dual norms.
In \cite{tudisco2022nonlinearspectralduality} the spectrum of positively \(p\)-homogeneous convex functions in finite dimensions was analyzed with respect to different duality transforms.
We are extending their analysis with respect to the Fenchel conjugate to a broader class of convex functionals in infinite-dimensional Banach spaces.
We recall \Cref{def:RQ} and formulate the corresponding definition for the dual RQ.
\begin{definition}[Dual Rayleigh quotient]\label{def:dRQ}
    We define the dual Rayleigh quotient as the mapping \(R_*:X^*\setminus \{0\}\to \R\cup\{\pm\infty\}\) given by
    \begin{equation*}\tag{dRQ}
        R_*(\zeta) = \frac{J^*(\zeta)}{H^*(\zeta)}\,.
    \end{equation*}
\end{definition}
We note that the dual RQ is not the Fenchel conjugate of the primal RQ, but the quotient of the Fenchel conjugates \(J^*\) and \(H^*\).
The primal RQ is positive, due to the boundedness from below, see \eqref{eq:growth_coercivity}.
However, the dual RQ can be negative, because \(J^*\) can be negative in principle. For example, if we choose \(X=\R_{>0}\) and \(J(u) = e^{u}\), then its Fenchel conjugate is given by \(J^*(\zeta) = \zeta\log(\zeta) - \zeta\), which is -1 for \(\zeta=1\).
We define the dual eigenproblem analogously to the primal problem \Cref{def:eigenproblem}, but with the Fenchel conjugates \(J^*\) and \(H^*\).
\begin{definition}[\(q\)-eigenvector]
We call \(\zeta\in X^*\setminus\{0\}\) a \(q\)-eigenvector of \(J^*\) with subgradient \(u\in \partial J^*(\zeta) \subset  X\), duality map \(\partial H^*\), and eigenvalue \(R_*(\zeta) = \mu\in\R\) if it fulfills the inclusion 
\begin{equation*}
    u - \mu\, \partial H^*(\zeta) \ni 0\,.
\end{equation*}
\end{definition}
\begin{remark}
    Within our abstract framework, the existence of nonlinear eigen- vectors is not guaranteed.  Taking for example the one dimensional case \(X=\R\) with \(J(u) = e^{u^2}-1\) and \(H(u) = \frac{1}{2}u^2\) we see that the growth assumption \eqref{eq:growth_coercivity} is fulfilled,
    but the only nonlinear eigenvector would be the trivial zero vector which is excluded.
    When discussing results for ground states or general eigenvectors, we implicitly assume their existence hereinafter. For the existence theory of infinitely many distinct nonlinear eigenfunctions in infinite dimensional Banach spaces we refer to the Lusternik-Schnirelmann theory \cite{Lusternik_Schnirelmann_1934}, see \cite{Amann1972_LusternikSchnirelmann} and the re\-fe\-ren\-ces therein.
\end{remark}
The two eigenproblems are strongly related.
Indeed, in the mathematical setting we introduced above the primal and dual eigenproblem are equivalent.
We can use the Fenchel-Young equality \eqref{eq:Fenchel_Young_eq} for \(J\) and \(J^*\) as well as the homogeneity of \(H\) and \(H^*\) with dual Hölder exponents \(p\) and \(q\) to connect primal and dual eigenvectors and values.
\begin{theorem}[Eigenvector duality]\label{thm:duality}
    The primal and dual eigenvector problem are equivalent in the sense that \(u\in X\) is a \(p\)-eigenvector of $J$ with subgradient \(\zeta\) and eigenvalue \(\lambda\), if and only if \(\zeta\) is a \(q\)-eigenvector of $J^*$ with subgradient \(u\) and dual eigenvalue \(\mu = \lambda^{1-q}\).
\end{theorem}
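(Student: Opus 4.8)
The plan is to set up a chain of equivalences that carries the two inclusions (together with the eigenvalue identity) defining a $p$-eigenvector over to the dual side, using the Fenchel--Young equality \eqref{eq:Fenchel_Young_eq} and the positive homogeneity of the duality maps, and then to observe that every link in the chain is reversible because $\lambda > 0$ --- which is forced by the growth assumption \eqref{eq:growth_coercivity} together with $H(u) > 0$ (the latter holds since $\mathcal N(H) = \{0\}$). Reading the chain left-to-right gives one implication of the theorem, right-to-left the other. Since $H$ is absolutely $p$-homogeneous, $\partial H$ is $(p-1)$-homogeneous, and dually $H^* = \tfrac{1}{q}|\cdot|_{H^*}^q$ is absolutely $q$-homogeneous, so $\partial H^*$ is $(q-1)$-homogeneous; the exponent arithmetic behind the transformation is $(1-q)(1-p) = 1$, i.e.\ $p+q = pq$.

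First, $\zeta \in \partial J(u) \iff u \in \partial J^*(\zeta)$ by \eqref{eq:Fenchel_Young_eq}. Next, $\zeta - \lambda\,\partial H(u)\ni 0 \iff \lambda^{-1}\zeta \in \partial H(u) \iff u \in \partial H^*(\lambda^{-1}\zeta) \iff u \in \lambda^{1-q}\,\partial H^*(\zeta) \iff u - \lambda^{1-q}\,\partial H^*(\zeta)\ni 0$, where the middle step is \eqref{eq:Fenchel_Young_eq} for $H$ and the one after is $(q-1)$-homogeneity of $\partial H^*$, both reversible since $\lambda > 0$. Finally I would show that, given these inclusions, the eigenvalue identities on the two sides are equivalent: with $\xi \coloneqq \lambda^{-1}\zeta \in \partial H(u)$, the Euler identity \eqref{eq:euler_ident} gives $\langle\zeta,u\rangle = \lambda\langle\xi,u\rangle = \lambda\,p\,H(u)$, hence (Fenchel--Young for $J$) $J^*(\zeta) = \lambda p H(u) - J(u)$ and (Fenchel--Young and Euler for $H$) $H^*(\zeta) = \lambda^q H^*(\xi) = \lambda^q(p-1)H(u)$, so $R_*(\zeta) = \dfrac{\lambda p H(u) - J(u)}{\lambda^q(p-1)H(u)}$, and this equals $\lambda^{1-q}$ precisely when $J(u) = \lambda H(u)$, i.e.\ precisely when $R(u) = \lambda$. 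Together with the non-degeneracy equivalence $u \neq 0 \iff \zeta \neq 0$ --- immediate from $|\zeta|_{H^*} = \lambda|u|_H^{p-1}$, see \eqref{newequation} --- this yields the stated equivalence with $\mu = \lambda^{1-q}$ (equivalently $\lambda = \mu^{1-p}$, since $(1-q)(1-p) = 1$). Here reflexivity is what lets us apply Fenchel--Young for $H$ and $H^*$ symmetrically without bookkeeping $X$ versus $X^{**}$.

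None of the individual steps is deep; the substance is entirely the Fenchel--Young and Euler identities. The points that deserve care are: the homogeneity exponent of $\partial H^*$ (getting $\lambda^{1-q}$ and not some other power), the fact that the last link is a genuine two-way equivalence --- i.e.\ that $\lambda^{1-q}$ is \emph{exactly} the dual Rayleigh quotient $R_*(\zeta)$, not merely some multiplier for which the dual inclusion happens to hold --- and the requirement $\lambda > 0$ that makes the whole chain reversible, which is where the growth assumption \eqref{eq:growth_coercivity} and $\mathcal N(H) = \{0\}$ enter.
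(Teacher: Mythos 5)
Your proposal is correct and follows essentially the same route as the paper's proof: Fenchel--Young to flip the subgradient inclusions, the $(q-1)$-homogeneity of $\partial H^*$ to produce the factor $\lambda^{1-q}$, and the Euler identity combined with Fenchel--Young to show $R(u)=\lambda \Leftrightarrow R_*(\zeta)=\lambda^{1-q}$. The only (cosmetic) difference is that the paper packages the last step as the identity $\tfrac{1}{p\lambda}R(u)+\tfrac{1}{q\lambda^{1-q}}R_*(\zeta)=1$, whereas you solve for $R_*(\zeta)$ explicitly; your added remarks on $u\neq 0\Leftrightarrow\zeta\neq 0$ and on where $\lambda>0$ is used are sound and slightly more careful than the original.
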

\begin{proof}
    Let \(u\in X\).
    Then, by reflexivity of \(X\) it holds
    \begin{align}\label{eq:proof_evec_duality}
        \zeta \in \partial J(u) 
        \Leftrightarrow u \in \partial J^*(\zeta) 
    \end{align}
    and 
    \[
        \eta \in \partial H(u) 
        \Leftrightarrow u \in \partial H^*(\eta).
    \]
    We fix \(u\) and \(\zeta\) such that \eqref{eq:proof_evec_duality} holds. Further, we let \(\lambda>0\) and set \(\eta=\frac{\zeta}{\lambda}\).
    The eigenvector condition \(\zeta\in\lambda\partial H(u)\) can equivalently be written as
    \[u \in \partial H^*(\eta) = \partial H^*\left(\frac{\zeta}{\lambda}\right) = \lambda^{1-q}\partial H^*(\zeta)\,,\]
    where we used that  $H^*$  is absolutely  \(q\)-homogeneous, see before \eqref{eq:euler_ident}. 
    Setting \(\mu=\lambda^{1-q}\), this is again equivalent to the dual eigenvector condition \( \partial J^*(\zeta)\ni u \in\lambda^{1-q}\,\partial H^*(\zeta) = \mu\, \partial H^*(\zeta)\).
    
    It remains to show that \(\lambda = R(u)\Leftrightarrow \mu = R_*(\zeta)\).
    Using the Euler identity \eqref{eq:euler_ident} and the Fenchel-Young identity \eqref{eq:Fenchel_Young_eq} along with \(\eta\in\partial H(u)\) we get
    \begin{align*}
    \frac{J(u)}{H(u)} &= \frac{pJ(u)}{\langle\eta, u\rangle} = \frac{\lambda pJ(u)}{\langle\zeta, u\rangle} = \lambda p\frac{\langle\zeta, u\rangle - J^*(\zeta)}{\langle\zeta, u\rangle} = \lambda p\left[1-\frac{J^*(\zeta)}{\langle\zeta, u\rangle}\right] \\
    &=\lambda p\left[1-\frac{J^*(\zeta)}{\lambda\langle \eta, u\rangle}\right]= \lambda p\left[1-\frac{J^*(\zeta)}{\lambda qH^*(\eta)}\right]= \lambda p\left[1-\frac{J^*(\zeta)}{\lambda^{1-q}qH^*(\zeta)}\right]\,.
    \end{align*}
    We finally have
    \[\frac{1}{p\lambda}\frac{J(u)}{H(u)} + \frac{1}{q\lambda^{1-q}}\frac{J^*(\zeta)}{H^*(\zeta)} \: = \: 1\,,\]
    which is the desired property.
\end{proof}
We have seen above that the dual RQ can become negative, but the dual eigenvalues are always positive, due to the fact that the primal eigenvalues are always positive. 
The eigenvalues of the primal and dual problem are mutually reciprocal with exponent \(1-q<0\), i.e., the smallest primal eigenvalue is related to the largest dual eigenvalue.
The smallest eigenvalue is the eigenvalue corresponding to the ground-state eigenvector and is connected to the optimal constant in the growth assumption \eqref{eq:growth_coercivity}. 
The latter reverses for the dual functional in comparison to the primal.
\begin{lemma}[Boundedness of the dual Rayleigh quotient]\label{lemma:boundedness}
    The primal Rayleigh quotient \Cref{def:RQ} is sharply bounded from below by the ground-state eigenvalue \(\lambda_*\) if and only if the dual Rayleigh quotient is sharply bounded from above by the dual eigenvalue \(\mu_* \coloneqq\lambda_*^{1-q}\).
\end{lemma}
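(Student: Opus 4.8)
The plan is to translate both boundedness statements into pointwise inequalities between the involved functionals and then exploit that the Fenchel conjugate is order-reversing and, on $\Gamma_0$, involutive, together with the homogeneity of $H$ and $H^*$. First I note that after the quotient construction $|\cdot|_H$ is a genuine norm, so $H(u)>0$ for $u\neq 0$ and $H^*(\zeta)>0$ for $\zeta\neq 0$; combined with the growth assumption \eqref{eq:growth_coercivity} (which gives $J\geq 0$, forces $\operatorname{dom}(J)\subseteq\operatorname{dom}(H)$, and hence $J(u)=\infty$ wherever $H(u)=\infty$), this shows that ``$R(u)\geq\lambda_*$ for all $u\in X\setminus\{0\}$'' is equivalent to the pointwise bound $J\geq\lambda_* H$ on all of $X$, and likewise ``$R_*(\zeta)\leq\mu_*$ for all $\zeta\in X^*\setminus\{0\}$'' is equivalent to $J^*\leq\mu_* H^*$ on $X^*$.

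The core step is then the chain of equivalences
\[
J\geq\lambda_* H \text{ on } X \;\Longleftrightarrow\; J^*\leq(\lambda_* H)^* \text{ on } X^* \;\Longleftrightarrow\; J^*\leq\mu_* H^* \text{ on } X^*,
\]
where the first ``$\Rightarrow$'' is order-reversal of the conjugate and the first ``$\Leftarrow$'' follows by conjugating once more and using $J^{**}=J$, $(\lambda_* H)^{**}=\lambda_* H$, valid since $J,\lambda_* H\in\Gamma_0(X)$. The second equivalence is a scaling computation: since $H^*$ is absolutely $q$-homogeneous, $(\lambda_* H)^*(\zeta)=\lambda_* H^*(\zeta/\lambda_*)=\lambda_*^{1-q}H^*(\zeta)=\mu_* H^*(\zeta)$. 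For the converse direction I use in the same way $(\mu_* H^*)^*(u)=\mu_*^{1-p}H(u)$ together with the arithmetic identity $(1-p)(1-q)=1$ (equivalently $pq=p+q$, from $\frac1p+\frac1q=1$), which gives $\mu_*^{1-p}=\lambda_*$. This establishes the equivalence of the (non-sharp) bounds ``$R\geq\lambda_*$'' and ``$R_*\leq\mu_*$''.

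It remains to argue sharpness on both sides. By definition of the ground state $u_*$ (whose existence is assumed here), $\lambda_*=R(u_*)$, so the lower bound $\lambda_*$ is attained and therefore sharp. For the dual side, pick any subgradient $\zeta_*\in\partial J(u_*)$; since $u_*$ is a $p$-eigenvector with eigenvalue $\lambda_*$ and $\zeta_*\neq 0$ (as $\langle\zeta_*,u_*\rangle=\lambda_* pH(u_*)>0$ by the Euler identity \eqref{eq:euler_ident}), \Cref{thm:duality} identifies $\zeta_*$ as a $q$-eigenvector of $J^*$ with $R_*(\zeta_*)=\mu_*$, so the upper bound $\mu_*$ is attained and therefore sharp. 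Combining the equivalence of the plain bounds with attainment on both sides yields the claimed equivalence of the two sharp-boundedness statements.

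The step I expect to require the most care is not conceptual but the bookkeeping at the degenerate values: verifying that the passage between Rayleigh quotient bounds and pointwise functional inequalities remains valid at $u=0$ and where $H$ or $H^*$ takes the value $+\infty$ (this is precisely where \eqref{eq:growth_coercivity} is needed), and making sure that ``sharp'' — i.e.\ that the bounds are exactly the infimum and supremum — transfers, for which the existence of $u_*$ and the equivalence \Cref{thm:duality}, rather than mere order-reversal of the conjugate, are the essential inputs.
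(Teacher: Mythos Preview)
Your proposal is correct and follows essentially the same approach as the paper: translate the Rayleigh-quotient bounds into the pointwise inequality $J\geq\lambda_* H$, apply order-reversal of the Fenchel conjugate together with the $q$-homogeneity scaling $(\lambda_* H)^*=\lambda_*^{1-q}H^*$, and invoke \Cref{thm:duality} on the ground-state pair $(u_*,\zeta_*)$ for sharpness. The paper handles the reverse implication by appealing to reflexivity, whereas you make this explicit via the biconjugate $J^{**}=J$; these are the same argument.
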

\begin{proof}
Let \(\lambda_*\) be the global minimum of the primal RQ.
For any \(\zeta\in X^*\), with the definition of the Fenchel conjugate \Cref{def:Fenchel_conj} and the growth assumption \eqref{eq:growth_coercivity} we get
    \begin{align*}
        J^*(\zeta) &= \sup_{u\in X}\,\langle \zeta, u\rangle - J(u) \leq \sup_{u\in X}\,\langle \zeta, u\rangle - \lambda_*H(u) \\ &= (\lambda_*H)^*(\zeta) = \lambda_*H^*\left(\frac{\zeta}{\lambda_*}\right)=\lambda_*^{1-q}H^*(\zeta).
    \end{align*}
The boundedness assumption on the primal RQ translates to a bound from above for the dual RQ due to the order reversing of the Fenchel conjugates.
Based on the reflexivity property the reverse statement can be shown equivalently.\\
The inequality becomes sharp for \(\zeta_*\in \partial J(u_*)\), where $u_*$ denotes the ground-state eigenvector of the primal eigenproblem.
We use \Cref{thm:duality} to argue that the maximum is attained for the dual eigenvector \(\zeta_*\) with subgradient   \(u_*\). 
\end{proof}
\Cref{lemma:boundedness} shows that the maximum of the dual RQ is attained by the eigenvalue \( \mu_*\coloneqq \lambda_*^{1-q}\) of the dual of the ground-state eigenvector.
With the same argument, we see that boundedness of the dual RQ from below by a positive constant holds if and only if the primal RQ is bounded from above.
Boundedness from below by zero of the dual RQ can be obtained with the additional assumption that \(J\) is absolutely \(p\)-homogeneous, since then  \(J^*\) is a  power of a norm, and hence  nonnegative. 
\begin{example}[Optimal norm inequality]\label{ex:optimal_norm_ineq}
We can write an absolutely \(p\)-homo\-ge\-neous functional \(J\) as the power of a norm \(J(u) = \frac{1}{p}|u|_J^p\), by \Cref{lemma:phomog_seminorm},  with dual \(J^*(\zeta) = \frac{1}{q}|\zeta|_{J^*}^q\), where \(|\cdot|_{J^*}\) is the dual norm of \(|\cdot|_J\).
The growth assumption  \eqref{eq:growth_coercivity} can be seen as a norm inequality
\[|u|_H\leq \lambda_*^{-\nicefrac{1}{p}} |u|_J\,.\]
The duality theory \Cref{lemma:boundedness} now states that there exists a dual norm inequality
\[|\zeta|_{J^*}\leq\lambda_*^{\nicefrac{(1-q)}{q}}|\zeta|_{H^*}\,.\]
We see that the optimal constant is an eigenvalue for the ground-state eigenvector.
The primal and dual optimal constants are equal since \(\frac{1-q}{q} = -\frac{1}{p}\).\\
For a bounded domain \(\Omega\) and \(X=W^{1,p}_0(\Omega)\) with \(J(u) = \int_\Omega|\nabla u|^p\,dx\) and \(H(u) = \|u\|_{L^p(\Omega)}^p\), we obtain the Poincaré inequality, where \(\lambda_*^{-\nicefrac{1}{p}}\) is the optimal Poincaré constant depending only on the domain \(\Omega\) and \(p\), namely (see  \cite{HYND2017approxrq})
\[\|u\|_{L^p(\Omega)} \leq \lambda_*^{-\nicefrac{1}{p}}\|\nabla u\|_{L^p(\Omega)}\,.\]
Its dual counterpart is given by the inequality 
\[\|\zeta\|_{W^{-1,q}(\Omega)}\leq \lambda_*^{-\nicefrac{1}{p}} \|\zeta\|_{L^q(\Omega)},\]
where the norm \(\|\cdot\|_{W^{-1,q}(\Omega)}\) is given by 
$$\|\zeta\|_{W^{-1,q}(\Omega)} = \sup\left\{\langle \zeta,u\rangle\mid u\in W^{1,p}_0,\,\|\nabla u\|_{L^p(\Omega)}=1  \right\}.$$
\end{example}
\begin{example}[Maximizers of the Rayleigh quotient are eigenvectors]\label{ex:max_rq}
    We show that the maximizer \(\bar{u}\) of   a RQ, under the assumption that  this  RQ is  from above and that such maximizer exists, is an eigenvector. 
   This could for example be applied to the dual RQ (which is bounded from above) if there exists a maximizer.
We set
\[\bar{\lambda}\coloneqq\max_{u\in X\setminus\{0\}} \frac{J(u)}{H(u)}\,,\]
i.e., \(J(\bar{u}) = \bar{\lambda}H(\bar{u})\), and we fix \(\bar{\zeta}\in \partial J(\bar{u})\).
We prove that \(\bar{u}\) is a \(p\)-eigenvector with subgradient 
\(\bar{\zeta}\) and eigenvalue \(\bar{\lambda}\).
By the definition of the subgradient we have for any \(u\in X\)
\begin{align*}
    \langle\bar{\zeta}, u-\bar{u}\rangle + J(\bar{u}) \leq &J(u) \\
    \Leftrightarrow  \langle\bar{\zeta}, u-\bar{u}\rangle \leq J(u) - &J(\bar{u}) = J(u) - \bar{\lambda}H(\bar{u})\leq \bar{\lambda}(H(u) - H(\bar{u}))\,.
\end{align*}
Dividing by \(\bar{\lambda}\) gives \(\frac{\bar{\zeta}}{\bar{\lambda}}\in \partial H(\bar{u})\), which proves that \(\bar{u}\) is a \(p\)-eigenvector with subgradient \(\bar{\zeta}\) and eigenvalue \(\bar{\lambda}\).
\end{example}
\begin{remark}[Minimal norm element]\label{lem:minimal_norm}
    Let \(J\) be absolutely \(p\)-homogeneous.  Then, the dual eigenvector \(\zeta\in X^*\) of a primal eigenvector \(u\in X\) with eigenvalue $\lambda$ is an element of minimal \(|\cdot|_{H^*}\)-norm in the subgradient of \(\partial J(u)\).\\
To prove this statement, similarly as in \cite{Bungert2021spectrdec}, for any \(\tilde{\zeta}\in\partial J(u)\), by using \eqref{newequation}, \eqref{eq:euler_ident},  and the fact that  \(\frac{\zeta}{\lambda}\in\partial H(u)\), we calculate 
    \begin{align*}
        |u|_H|\zeta|_{H^*} = \lambda|u|_H|\frac{\zeta}{\lambda}|_{H^*} = \lambda |u|_H|u|_H^{p-1} =\lambda p H(u) = \langle\zeta, u\rangle = pJ(u) =\langle \tilde{\zeta} , u\rangle \,.
    \end{align*}
    By Cauchy–Schwarz this gives us $ |u|_H|\zeta|_{H^*} \le   |u|_H|\tilde{\zeta}|_{H^*}$.
\end{remark}
Following \Cref{thm:duality}, if \(u\) and \(\zeta\) are dual eigenvectors, then their respective eigenvalues $\lambda$ and $\mu$ (thus their Rayleigh quotients) are related by \(\lambda^{-\nicefrac{1}{p}} = \mu^{\nicefrac{1}{q}}\).
This observation motivates us to formulate a duality gap between arbitrary primal and dual vectors \(u\in X\) and \(\zeta\in X^*\), which is zero if they are dual eigenvectors (under extra assumptions, this becomes an iff).
\begin{definition}[Duality gap]\label{def:duality_gap}
    We define the duality gap of the eigenproblem \(g\colon X\times X^*\to \R\) as 
    \[g(u,\zeta) \coloneqq R^{-\nicefrac{1}{p}}(u) - \operatorname{sign}(J^*(\zeta))| R_*(\zeta)|^{\nicefrac{1}{q}}\,.\]
\end{definition}
The boundedness of the  primal and dual RQ, as discussed in \Cref{lemma:boundedness}, translates to the boundedness of the duality gap as
\[\infty > g(u, \zeta)\geq -\lambda_*^{\nicefrac{-1}{p}}\quad \forall u\in X, \zeta\in X^*\,.\]
If \(J\) is absolutely \(p\)-homogeneous, then \(J^*\) and \(R_*\) are always nonnegative (see comment preceding \Cref{ex:optimal_norm_ineq}) and the duality gap is symmetrically bounded in the sense that
\[\lambda_*^{\nicefrac{-1}{p}} \geq g(u, \zeta)\geq -\lambda_*^{\nicefrac{-1}{p}}\quad \forall u\in X, \zeta\in X^*\,.\]
Clearly, we have \(g(u,\zeta)=0\) for primal dual eigenvector pairs. The converse  statement is true under the additional assumption \(\zeta\in\partial J(u)\).
This will be shown below in detail in \Cref{prop:roots_dgap}.

\subsection{Geometric characterization}
In the following we formalize the natural intuition that eigenvectors are not changing their direction but are only scaled by application of the respective operator.
Recall that \(H\) is the power of a norm in the vector space \(X\).
Without loss of generality, we say \(H(u) = \frac{1}{p}|u|_H^p\), and \(H^*(\zeta)  =\frac{1}{q}|\zeta|_{H^*}^q\). We observe that \(\langle \zeta, u\rangle \leq |u|_H|\zeta|_{H^*}\) with equality iff \(\zeta\in \partial H(u)\).
The duality mapping helps us to formulate geometric properties for elements between the primal and dual space.
\begin{definition}[Cosine angle]\label{def:cosine_angle}
    We define the cosine of the angle between the elements \(u\in X\) and \(\zeta\in X^*\) as the cosine similarity
    \[\operatorname{cosim}(u, \zeta) \coloneqq \frac{\langle\zeta, u\rangle}{|u|_H|\zeta|_{H^*}}\,.\]
    We say \(u\) and \(\zeta\) are orthogonal iff \(\operatorname{cosim}(u,\zeta) = 0\).
    Furthermore, we say that they are parallel iff \(\operatorname{cosim}(u,\zeta) = 1\), and that they are  anti-parallel  iff \(\operatorname{cosim}(u,\zeta) = -1\).\\
\end{definition}
It is well known that the gradient of a differentiable function is either zero or perpendicular to the level set of the function at this point.
For an eigenvector \(u\), the subdifferentials \(\partial J(u)\) and \(\partial H(u)\), with positive scaling by the eigenvalue, share the same element in the dual space.
We are able to deduce that for eigenvectors there exists an element in the subdifferential of \(\partial J\) perpendicular to the sublevel set of \(H\).  
The sublevel sets of \(H\) are given by \(L_u(H) = \{v\in X| \, H(v)\leq H(u)\} = B_{|u|_H}(0)\), which are balls with radius \(|u|_H\) centered at the origin.
\begin{figure}[!htbp]
    \centering
    \includegraphics[width=0.42\textwidth]{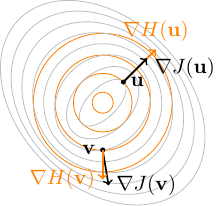}
    \caption{Sketch of the geometric characterization of nonlinear eigenvectors. The gray ellipses depict the level sets of \(J\) and the orange circles the level sets of \(H\), which are balls around the origin. There are two vectors \(u,v\)   with the gradients of \(H\) (orange) and \(J\) (black) drawn at these points. The gradients are perpendicular to the level sets. The vector \(v\) cannot be a an eigenvector since \(\nabla H(v)\) and \(\nabla J(v)\) do not share the same direction, thus \(\nabla J(v)\) is not normal to the balls around the origin. The gradients \(\nabla H(u)\) and \(\nabla J(u)\) at the point \(u\) share the same direction. Indeed, the vector \(u\) is an eigenvector if its norm fulfills the condition \(|u|_H = \frac{pJ(u)}{|\zeta|_{H^*}}\).
    }
    \label{fig:geom_charact}
\end{figure}

\begin{proposition}[Geometric characterization]\label{prop:geom_charact}
An element \(u \in X\) is a \(p\)-eigen\-vector of \(J\) with subgradient \(\zeta\) and eigenvalue \(\lambda\) iff
\begin{equation}\label{eq:geom_cond}
    \langle\zeta,u-v\rangle \geq 0~\forall v\in B_{|u|_H}(0), \quad |u|_{H} = \frac{pJ(u)}{|\zeta|_{H^*}}, \quad \text{ and  } \quad \zeta\in\partial J(u) \,.
\end{equation}
\end{proposition}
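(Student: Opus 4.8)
The plan is to reduce the three-part condition \eqref{eq:geom_cond} to the two ingredients of \Cref{def:eigenproblem}, namely $\zeta\in\partial J(u)$ together with $\tfrac{\zeta}{\lambda}\in\partial H(u)$ for the eigenvalue $\lambda=R(u)=J(u)/H(u)$. The third condition in \eqref{eq:geom_cond} is literally $\zeta\in\partial J(u)$, so it transfers in both directions for free. Using the growth assumption \eqref{eq:growth_coercivity} and $\mathcal N(H)=\{0\}$ one has $0<\lambda<\infty$ whenever $\zeta\in\partial J(u)$ and $u\neq 0$, so dividing by $\lambda$ is always harmless. The remaining task is to see that, given $\zeta\in\partial J(u)$, the inclusion $\tfrac{\zeta}{\lambda}\in\partial H(u)$ is equivalent to the conjunction of the first condition of \eqref{eq:geom_cond} (a \emph{direction} statement, $\operatorname{cosim}(u,\zeta)=1$) and the second condition of \eqref{eq:geom_cond} (a \emph{magnitude} statement fixing $|\zeta|_{H^*}$).

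For the forward implication, suppose $u$ is a $p$-eigenvector with subgradient $\zeta$ and eigenvalue $\lambda$, and set $\eta:=\tfrac{\zeta}{\lambda}\in\partial H(u)$. The subgradient inequality for $\eta$ reads $\langle\eta,v-u\rangle\le H(v)-H(u)$ for all $v\in X$; since $B_{|u|_H}(0)=\{v:H(v)\le H(u)\}$, this gives $\langle\eta,u-v\rangle\ge 0$ on that ball, and multiplying by $\lambda>0$ yields the first condition of \eqref{eq:geom_cond}. For the second, \eqref{newequation} gives $|\eta|_{H^*}=|u|_H^{p-1}$, hence $|\zeta|_{H^*}=\lambda|u|_H^{p-1}$, and since $J(u)=\lambda H(u)=\tfrac{\lambda}{p}|u|_H^{p}$ we get $\tfrac{pJ(u)}{|\zeta|_{H^*}}=|u|_H$, which is the second condition.

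For the converse, assume \eqref{eq:geom_cond}. Taking the supremum over $v\in B_{|u|_H}(0)$ in the first condition gives $\langle\zeta,u\rangle\ge|u|_H\,|\zeta|_{H^*}$, which combined with the universal pairing bound $\langle\zeta,u\rangle\le|u|_H\,|\zeta|_{H^*}$ forces equality (and $|\zeta|_{H^*}<\infty$, as $|u|_H>0$). Put $\lambda:=J(u)/H(u)>0$ and $\eta:=\tfrac{\zeta}{\lambda}$. The second condition of \eqref{eq:geom_cond} then yields $|\eta|_{H^*}=\tfrac1\lambda|\zeta|_{H^*}=\tfrac{pH(u)}{|u|_H}=|u|_H^{p-1}$, so that, using $(p-1)q=p$, one gets $H^*(\eta)=\tfrac1q|u|_H^{p}$; and the equality just obtained gives $\langle\eta,u\rangle=\tfrac1\lambda\langle\zeta,u\rangle=|u|_H\,|\eta|_{H^*}=|u|_H^{p}=H(u)+H^*(\eta)$. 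By the Fenchel--Young identity \eqref{eq:Fenchel_Young_eq} this means $\eta\in\partial H(u)$, i.e.\ $\zeta\in\lambda\,\partial H(u)$ with $\lambda=R(u)$; together with $\zeta\in\partial J(u)$ this is exactly \Cref{def:eigenproblem}, so $u$ is a $p$-eigenvector with subgradient $\zeta$ and eigenvalue $\lambda$.

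I do not expect a genuine obstacle: the argument is careful bookkeeping of three scalar identities around the equality case of the pairing inequality and the Fenchel--Young identity. The one point worth attention is that the first condition of \eqref{eq:geom_cond} alone only pins down the direction of $\zeta$ relative to $u$, and one really needs the normalization in the second condition to recover $|\zeta|_{H^*}=\lambda|u|_H^{p-1}$, i.e.\ the precise magnitude that places $\tfrac{\zeta}{\lambda}$ inside $\partial H(u)$ rather than merely on the ray $\R_{>0}\,\partial H(u)$; this is exactly where the $p$-homogeneity of $H$ (through $\partial H(u)=|u|_H^{p-1}\,\partial|\cdot|_H(u)$ and the Hölder relation $(p-1)q=p$) is actually used.
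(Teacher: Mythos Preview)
Your proof is correct, and the forward direction is essentially the same as the paper's. The converse, however, is organized differently and is arguably cleaner. For the implication \eqref{maximality} (that the variational inequality in \eqref{eq:geom_cond} forces $\langle\zeta,u\rangle=|u|_H|\zeta|_{H^*}$), the paper constructs an explicit element $\tilde v\in\partial H^*(\zeta|u|_H^{p-1}/|\zeta|_{H^*})$ lying on the sphere that attains the supremum, whereas you simply observe that $\sup_{|v|_H\le|u|_H}\langle\zeta,v\rangle=|u|_H|\zeta|_{H^*}$ by the very definition of the dual norm, which is shorter and does not require attainment. For the core step $\tfrac{\zeta}{\lambda}\in\partial H(u)$, the paper verifies the subgradient inequality $\langle\tfrac{\zeta}{R(u)},v-u\rangle+H(u)\le H(v)$ through a chain of estimates using Cauchy--Schwarz and Young, while you compute the three scalars $\langle\eta,u\rangle$, $H(u)$, $H^*(\eta)$ and invoke the equality case of Fenchel--Young \eqref{eq:Fenchel_Young_eq}. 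Both routes use the same ingredients (the length condition to pin down $|\eta|_{H^*}=|u|_H^{p-1}$, and $(p-1)q=p$), but your packaging via Fenchel--Young makes the structure more transparent and avoids the inequality chain.
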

Before we give the proof, we interpret the result. Nonlinear eigenvectors are characterized as vectors on a sphere with radius chosen as their respective norm \(|u|_H\), for which the subgradient \(\zeta\) of \(J\) is normal to this sphere at \(u\) and has the length $ |\zeta|_{H^*} = \frac{pJ(u)}{|u|_{H}}$, see \Cref{fig:geom_charact} for a visualization.
Concerning the maximality in the variational inequality in \eqref{eq:geom_cond}  we observe 
\begin{align}\label{maximality}
\langle\zeta,u-v\rangle \geq 0~\forall v\in B_{|u|_H}(0) \quad \Rightarrow \quad
\langle \zeta, u\rangle = |\zeta|_{H^*} |u|_H\,,
\end{align}
as we will show in the proof below. This means that the aforementioned cosine similarity is 1 for \(\zeta\in\partial J(u)\), i.e., $u$ and $\zeta$ are parallel.
This angular condition easily reads as the well-known property of linear eigenvectors, namely, their direction is invariant under the application of the operator, but they only get scaled (positively in our case).

\begin{proof}[Proof of \Cref{prop:geom_charact}]
We start the proof by showing \eqref{maximality}.   To this end, by assumption we have
\begin{equation}\label{eq:geom_charact_inequal_proof}
    |\zeta|_{H^*}|u|_H \geq \langle \zeta, u\rangle \geq \langle \zeta, v\rangle \quad \forall v\in B_{|u|_H}(0)\,.
\end{equation}
We pick \(\tilde{v}\in \partial H^*\left(\frac{\zeta|u|_H^{p-1}}{|\zeta|_{H^*}}\right)\), which by (the dual version of) \eqref{newequation} satisfies
\[|\tilde{v}|_H = \left|\frac{\zeta|u|_H^{p-1}}{|\zeta|_{H^*}}\right|_{H^*}^{q-1} = |u|_H,\]
where we used   $(p-1)(q-1) = 1$.
Thus, \(\tilde{v}\in B_{|u|_H}(0)\) and   we get by \eqref{newequation}--\eqref{eq:euler_ident}
\[\langle \zeta, \tilde{v} \rangle = \frac{|\zeta|_{H^*}}{|u|_H^{p-1}}\langle \frac{\zeta|u|_H^{p-1}}{|\zeta|_{H^*}}, \tilde{v}\rangle = \frac{|\zeta|_{H^*}}{|u|_H^{p-1}} \left| \frac{\zeta|u|_H^{p-1}}{|\zeta|_{H^*}}\right|_{H^*}\left| \tilde{v}\right|_H = |\zeta|_{H^*}|\tilde{v}|_H = |\zeta|_{H^*}|u|_H.\]
With \eqref{eq:geom_charact_inequal_proof} we receive that \(\langle \zeta, u\rangle = |\zeta|_{H^*}|u|_H\).  

We now start the obtain proof with the implication that, if \(u\) is a \(p\)-eigenvector of \(J\) with subgradient \(\zeta\) and eigenvalue \(\lambda\), then the geometric characterization holds. Clearly, $\zeta \in \partial J(u)$. Choose  \(\eta\in\partial H(u)\) such that \(\zeta=\lambda\eta\) for \(\lambda = R(u) =\frac{pJ(u)}{|u|_H^p}\).
Then by the definition of the subdifferential it holds
\begin{align*}
\langle\eta,v-u\rangle + \frac{1}{p}|u|_H^p\leq\frac{1}{p}|v|_H^p\quad\forall v \in X \quad 
\Rightarrow \quad  \langle\eta,v-u\rangle \leq 0\quad\forall v \in B_{|u|_H}(0)\,.
\end{align*}
Since $\lambda>0$, the first property in \eqref{eq:geom_cond} holds. With \(\eta\in\partial H(u)\), and \eqref{newequation} we get
\[\lambda|u|_H^p = \lambda|u|_{H}^{p-1}|u|_H = \lambda|\eta|_{H^*}|u|_H = |\zeta|_{H^*} |u|_H\,. \]
Thus, as \(\lambda|u|_H^p = pJ(u)\), we have \(|u|_{H} = \frac{pJ(u)}{|\zeta|_{H^*}}\).

Now, for the converse implication, we consider \(\zeta\in\partial J(u)\) such that \eqref{eq:geom_cond} holds. We show that \(u\) is \(p\)-eigenvector with subgradient \(\zeta\) by verifying \(\frac{\zeta}{R(u)}\in \partial H(u)\), with \(R(u)\).
Indeed, by \eqref{eq:geom_cond}, \eqref{maximality} we get \(pJ(u) =|\zeta|_{H^*}|u|_H= \langle\zeta, u\rangle\). Thus, for all $v \in X$ by Cauchy-Schwarz and Young's inequality
\begin{align*}
    \langle \frac{\zeta}{R(u)}, v-u\rangle + \frac{1}{p}|u|_H^p &=\frac{\langle\zeta, v\rangle}{R(u)} - \frac{\langle\zeta, u\rangle}{R(u)} + \frac{1}{p}|u|^p_H \\
    &\leq \frac{|\zeta|_{H^*}|v|_H}{R(u)} - \frac{\langle \zeta, u\rangle}{R(u)} + \frac{1}{p}|u|_H^p\\
    &=\frac{|\zeta|_{H^*} |v|_H |u|^p_H}{pJ(u)} - \frac{\langle\zeta, u\rangle|u|^p_H}{pJ(u)} + \frac{1}{p}|u|^p_H \\
    &=\frac{|\zeta|_{H^*} |v|_H |u|^p_H}{|\zeta|_{H^*}|u|_H} - \frac{\langle\zeta, u\rangle|u|^p_H}{\langle \zeta, u\rangle} + \frac{1}{p}|u|^p_H \\
    &= |u|^{p-1}_H|v|_H - |u|_H^p + \frac{1}{p}|u|_H^p\\
    &\leq \frac{1}{q}|u|_H^p + \frac{1}{p}|v|^p_H - |u|_H^p + \frac{1}{p}|u|^p\\
    &= \frac{1}{p}|v|^p_H\,.
\end{align*}
This concludes the proof.
\end{proof}
\begin{remark}[Homogeneous case]\label{the homogeneous case}
For \(J\) absolutely \(p\)-homogeneous and \(\zeta\in\partial J(u)\), the cosine similarity is always nonnegative, due to the Euler identity \(0\leq pJ(u)= \langle\zeta,u\rangle\).
Additionally, the condition for the vector length \(|u|_{H} = \frac{pJ(u)}{|\zeta|_{H^*}}\) in \eqref{eq:geom_cond} is not necessary because it always holds due to the Euler identity and \eqref{maximality}.
Due to the homogeneity, it is not restrictive to consider only eigenvectors with norm 1. 
\end{remark}
\begin{remark}[Maximizing the Rayleigh quotient in the homogeneous case]
In \Cref{ex:max_rq}, we showed that maximizers of the RQ are eigenvectors if the primal RQ is bounded and  maximizers exist.  
    In the case of \(J\) being absolutely \(p\)-homogeneous, maximizing the RQ is equivalent to maximizing a convex function over a closed convex domain, namely the unit ball induced by \(H\). In fact,
    \begin{align}\label{forlater}
    \underset{u\in X}{\max\,}\frac{pJ(u)}{|u|_H^p} =
    \underset{u\in X}{\max\,}pJ\left(\frac{u}{|u|_H}\right) =
    \underset{u\in X,\, |u|_H=1}{\max\,}pJ(u) =
    \underset{u\in X,\, |u|_H\leq1}{\max\,}pJ(u)\,.
    \end{align}
    For convex and nonconstant \(J\), the maximum over the unit sphere is the maximum over the unit ball and it is attained at the extremal values of this set \cite{Rockafellar1970convana}.
    Rockafellar characterizes the subgradients of maximizers as the non-zero vectors normal to the convex set \cite{Rockafellar1970convana}.
    In particular, he deduces the eigenvalue condition \(\lambda u \in \partial J(u)\) and \(|u|=1\) in the case of the unit Euclidean ball as a convex set in finite dimensions \cite[Theorem 32.4]{Rockafellar1970convana}.
    
  A similar result holds in our general setting: for a vector \(\bar{u}\) attaining the maximum of \(pJ\) on the unit ball, we denote its subgradients as \(\bar{\zeta}\in\partial pJ(\bar{u})\).
    By \Cref{ex:max_rq} and \eqref{forlater} we know that \(\bar{u}\) is an eigenvector. From \Cref{prop:geom_charact} we hence get that \(\bar{\zeta}\) is an element normal to the unit ball with respect to \(|\cdot|_H\) at \(\bar{u}\), namely
    \begin{equation*}
    	\langle \bar{\zeta}, \bar{u} - u\rangle \geq 0,\quad \forall u \in B_1(0)\,.
    \end{equation*}
\end{remark}
We recall that by \Cref{thm:duality} primal and dual eigen\-vectors are equivalent via the subgradient relation with reciprocal eigenvalues defined via their respective RQ. In \Cref{prop:geom_charact}, we characterize primal and dual eigenvectors as being parallel in terms of an angle induced by \(H\).

In the following, we analyze the cosine similarity between primal and dual eigenpairs, which allows us to identify the zeros of the duality gap introduced in \Cref{def:duality_gap}.
\begin{lemma}[Connection of the primal and dual RQ]\label{lemma:RQlemma}
Let \(\zeta\in \partial J(u)\) and \(J^*(\zeta)\geq 0\).
Then, for the rescaled primal and dual Rayleigh quotients it holds that
\[R_*^{\nicefrac{1}{q}}(\zeta) \: \leq \: \operatorname{cosim}(u,\zeta)\,R^{-\nicefrac{1}{p}}(u)\,.\]
Equality implies that    $ \left(qJ^*(\zeta)\right)^{\nicefrac{1}{q}}\left(pJ(u)\right)^{\nicefrac{1}{p}} = J(u) + J^*(\zeta)$.
\end{lemma}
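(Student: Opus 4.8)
The plan is to reduce the asserted inequality to the classical Young inequality, using the Fenchel--Young identity together with the explicit description of $H$ and $H^*$ as normalized powers of dual (semi-)norms.

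First I would record that all quantities involved are well defined: since $u\neq 0$ and $H$ has trivial null space, the growth assumption \eqref{eq:growth_coercivity} gives $J(u)\geq H(u)/c>0$, and since $\zeta\neq 0$ we have $H^*(\zeta)=\tfrac{1}{q}|\zeta|_{H^*}^q>0$; hence $R(u)$, $R_*(\zeta)$ and $\operatorname{cosim}(u,\zeta)$ are defined and the various roots below make sense. Because $\zeta\in\partial J(u)$, the Fenchel--Young identity \eqref{eq:Fenchel_Young_eq} yields $\langle\zeta,u\rangle=J(u)+J^*(\zeta)$.

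Next I would rewrite the right-hand side. Using $|u|_H=(pH(u))^{\nicefrac{1}{p}}$ and $|\zeta|_{H^*}=(qH^*(\zeta))^{\nicefrac{1}{q}}$ together with the Fenchel--Young identity,
\[
\operatorname{cosim}(u,\zeta)\,R^{-\nicefrac{1}{p}}(u)=\frac{\langle\zeta,u\rangle}{|u|_H\,|\zeta|_{H^*}}\,\Bigl(\frac{H(u)}{J(u)}\Bigr)^{\nicefrac{1}{p}}=\frac{J(u)+J^*(\zeta)}{(pJ(u))^{\nicefrac{1}{p}}\,(qH^*(\zeta))^{\nicefrac{1}{q}}},
\]
while $R_*^{\nicefrac{1}{q}}(\zeta)=(qJ^*(\zeta))^{\nicefrac{1}{q}}\,(qH^*(\zeta))^{-\nicefrac{1}{q}}$. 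Multiplying the claimed inequality through by the positive number $(qH^*(\zeta))^{\nicefrac{1}{q}}$, it becomes equivalent to
\[
(pJ(u))^{\nicefrac{1}{p}}\,(qJ^*(\zeta))^{\nicefrac{1}{q}}\;\leq\;J(u)+J^*(\zeta).
\]
This is exactly Young's inequality $ab\leq \tfrac{a^p}{p}+\tfrac{b^q}{q}$ applied to $a:=(pJ(u))^{\nicefrac{1}{p}}\geq 0$ and $b:=(qJ^*(\zeta))^{\nicefrac{1}{q}}\geq 0$, since then $\tfrac{a^p}{p}=J(u)$ and $\tfrac{b^q}{q}=J^*(\zeta)$. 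This proves the inequality. Since every manipulation above except this single use of Young's inequality is an equality, equality in the lemma forces equality in Young's inequality, i.e. $(pJ(u))^{\nicefrac{1}{p}}(qJ^*(\zeta))^{\nicefrac{1}{q}}=J(u)+J^*(\zeta)$ (equivalently $pJ(u)=qJ^*(\zeta)$), which is the claimed consequence.

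There is no genuine obstacle here; the only thing to watch is the bookkeeping of the conjugate exponents $p,q$ and of the normalizing factors $p,q$ inside the powers, together with the preliminary check that the standing assumptions exclude the degenerate cases $J(u)=0$ and $\zeta=0$ so that the Rayleigh quotients and the cosine similarity are defined.
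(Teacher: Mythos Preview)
Your proof is correct and follows essentially the same route as the paper: both arguments expand the Rayleigh quotients in terms of $(pJ(u))^{1/p}$, $(qJ^*(\zeta))^{1/q}$, $|u|_H$, $|\zeta|_{H^*}$, invoke the Fenchel--Young identity $\langle\zeta,u\rangle=J(u)+J^*(\zeta)$, and reduce the claim to the single Young inequality $(pJ(u))^{1/p}(qJ^*(\zeta))^{1/q}\le J(u)+J^*(\zeta)$. The only cosmetic difference is that the paper first bounds the product $R_*^{1/q}(\zeta)\,R^{1/p}(u)$ by $\operatorname{cosim}(u,\zeta)$ and then divides, whereas you clear denominators directly; the equality discussion is identical.
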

\begin{proof}
By using the Young inequality and the Fenchel-Young equality \eqref{eq:Fenchel_Young_eq} we rewrite the primal and dual RQ as
\begin{align*}
    R_*^{\nicefrac{1}{q}}(\zeta)\,R^{\nicefrac{1}{p}}(u) \ &= \ \left(\frac{qJ^*(\zeta)}{|\zeta|_{H^*}^q}\right)^{\nicefrac{1}{q}}\left(\frac{pJ(u)}{|u|_H^p}\right)^{\nicefrac{1}{p}} \\
    \ &\leq \  \frac{J(u) + J^*(\zeta)}{|u|_H|\zeta|_{H^*}} = \frac{\langle\zeta, u\rangle}{|u|_H|\zeta|_{H^*}}\tag{\theequation}\label{eq:rq_connection_ineq} = \operatorname{cosim}(u,\zeta)\,.
\end{align*}
As the Rayleigh quotients are nonnegative, the proof is concluded.
\end{proof}
This statement implies that the duality gap is always nonnegative for \(\zeta\in\partial J(u)\) and \(J^*(\zeta)\geq0\). For absolutely \(p\)-homogeneous functionals \(J\), by employing the Euler identity instead of the Young inequality, equality holds in \eqref{eq:rq_connection_ineq}.

\begin{proposition}[Roots of duality gap]\label{prop:roots_dgap}
The duality gap is zero for \(\zeta\in \partial J(u)\) if and only if \(u\) is a \(p\)-eigenvector of \(J\) with subgradient \(\zeta\) and eigenvalue \(\lambda\).
\end{proposition}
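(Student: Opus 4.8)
The plan is to reduce both implications to results already established: \Cref{thm:duality} for the ``only if'' part, and \Cref{lemma:RQlemma} together with \Cref{prop:geom_charact} for the ``if'' part. Throughout I take \(u\in X\setminus\{0\}\) and \(\zeta\in X^*\setminus\{0\}\), so that \(R(u)\) and \(R_*(\zeta)\) are defined.

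For the ``only if'' direction, suppose \(u\) is a \(p\)-eigenvector with subgradient \(\zeta\) and eigenvalue \(\lambda=R(u)>0\); write \(\zeta=\lambda\eta\) with \(\eta\in\partial H(u)\). First I would use the Euler identity \eqref{eq:euler_ident} to get \(\langle\zeta,u\rangle=\lambda pH(u)=pJ(u)\), and then the Fenchel--Young identity \eqref{eq:Fenchel_Young_eq} to deduce \(J^*(\zeta)=(p-1)J(u)>0\), where positivity comes from \eqref{eq:growth_coercivity} and \(u\neq0\) (hence \(H(u)>0\)); in particular \(\operatorname{sign}(J^*(\zeta))=1\). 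By \Cref{thm:duality}, \(\zeta\) is a \(q\)-eigenvector with \(R_*(\zeta)=\mu=\lambda^{1-q}>0\), and since \(\frac{1-q}{q}=-\frac{1}{p}\) this gives \(|R_*(\zeta)|^{\nicefrac{1}{q}}=\lambda^{-\nicefrac{1}{p}}=R^{-\nicefrac{1}{p}}(u)\), so \(g(u,\zeta)=0\).

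For the ``if'' direction, assume \(\zeta\in\partial J(u)\) and \(g(u,\zeta)=0\). Since \(R^{-\nicefrac{1}{p}}(u)>0\), the equation \(\operatorname{sign}(J^*(\zeta))|R_*(\zeta)|^{\nicefrac{1}{q}}=R^{-\nicefrac{1}{p}}(u)\) forces \(J^*(\zeta)>0\), hence \(R_*(\zeta)>0\) and \(R_*^{\nicefrac{1}{q}}(\zeta)\,R^{\nicefrac{1}{p}}(u)=1\). Now \Cref{lemma:RQlemma} is applicable and gives \(1=R_*^{\nicefrac{1}{q}}(\zeta)R^{\nicefrac{1}{p}}(u)\le\operatorname{cosim}(u,\zeta)\); since \(\operatorname{cosim}(u,\zeta)\le1\) always (from \(\langle\zeta,u\rangle\le|u|_H|\zeta|_{H^*}\)), we conclude \(\operatorname{cosim}(u,\zeta)=1\) and that equality holds throughout \Cref{lemma:RQlemma}. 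From the equality statement of \Cref{lemma:RQlemma} and the equality case of the weighted Young/AM--GM inequality I would extract \(qJ^*(\zeta)=pJ(u)\), whence \(\langle\zeta,u\rangle=J(u)+J^*(\zeta)=pJ(u)\) by Fenchel--Young, while \(\operatorname{cosim}(u,\zeta)=1\) reads \(\langle\zeta,u\rangle=|u|_H|\zeta|_{H^*}\); combining, \(|u|_H=pJ(u)/|\zeta|_{H^*}\), the normalization condition in \eqref{eq:geom_cond}. Finally, \(\sup_{v\in B_{|u|_H}(0)}\langle\zeta,v\rangle=|u|_H|\zeta|_{H^*}=\langle\zeta,u\rangle\) yields the variational inequality \(\langle\zeta,u-v\rangle\ge0\) for all \(v\in B_{|u|_H}(0)\), so all three conditions of \eqref{eq:geom_cond} hold and \Cref{prop:geom_charact} gives that \(u\) is a \(p\)-eigenvector with subgradient \(\zeta\) and eigenvalue \(\lambda=R(u)\).

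I expect the ``if'' direction to be the main obstacle: the definition of \(g\) only compares the rescaled Rayleigh quotients, which a priori is far weaker than the eigenvalue inclusion, so the point is to squeeze out of \(g(u,\zeta)=0\) --- using the sharpness of \Cref{lemma:RQlemma} --- both the angular condition \(\operatorname{cosim}(u,\zeta)=1\) (equivalently the variational inequality over the \(H\)-ball) and the length normalization \(|u|_H|\zeta|_{H^*}=pJ(u)\), which together with \(\zeta\in\partial J(u)\) are exactly the hypotheses of \Cref{prop:geom_charact}. A secondary technical point is handling \(\operatorname{sign}(J^*(\zeta))\) so that \(J^*(\zeta)>0\) (needed to invoke \Cref{lemma:RQlemma} and to control \(|R_*(\zeta)|^{\nicefrac{1}{q}}\)); in the absolutely \(p\)-homogeneous case \(J^*\) is a power of a norm and \eqref{eq:rq_connection_ineq} is already an equality, so this step is automatic.
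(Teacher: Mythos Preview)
Your proposal is correct and follows essentially the same route as the paper: the ``only if'' part via \Cref{thm:duality}, and the ``if'' part by forcing $J^*(\zeta)>0$, invoking \Cref{lemma:RQlemma} to get $\operatorname{cosim}(u,\zeta)=1$ with equality, extracting $qJ^*(\zeta)=pJ(u)$ from the Young equality case, and then feeding the three conditions of \eqref{eq:geom_cond} into \Cref{prop:geom_charact}. The only cosmetic difference is that you spell out the variational inequality over $B_{|u|_H}(0)$ explicitly from $\operatorname{cosim}(u,\zeta)=1$, whereas the paper leaves this implicit.
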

\begin{proof}
    The implication that the primal dual eigenvector pair is a root of the duality gap simply follows from the eigenvalue relation stated in \Cref{thm:duality}, where we use that the eigenvalues (and thus the RQ) are positive.
    
    For the opposite direction, we have
    \begin{align*}
        g(u,\zeta) = 0
        \ \Leftrightarrow \ R^{-\nicefrac{1}{p}}(u) = \operatorname{sign}(J^*(\zeta))| R_*(\zeta)|^{\nicefrac{1}{q}}\,.
    \end{align*}
    With \(R(u)>0\) we get \(\operatorname{sign}(J^*(\zeta)) = 1\) and \(| R_*(\zeta)|^{\nicefrac{1}{q}} = R_*^{\nicefrac{1}{q}}(\zeta)\).
    We can then apply \Cref{lemma:RQlemma} to obtain
    \begin{equation}\label{eq:gap}
            0 = g(u, \zeta) =  R^{-\nicefrac{1}{p}}(u) - R_*^{\nicefrac{1}{q}}(\zeta)\geq (1-\operatorname{cosim}(u,\zeta))\,R^{-\nicefrac{1}{p}}(u)\: \geq \: 0\,.
    \end{equation}
    From this chain of inequalities we deduce that the angular condition \(\operatorname{cosim}(u,\zeta)=1\) holds. To conclude, we apply \Cref{prop:geom_charact}.
    For this, in view of \eqref{maximality}, it remains to check the length condition in \eqref{eq:geom_cond}, namely \(pJ(u) = |\zeta|_{H^*}|u|_H\). We observe that $|\zeta|_{H^*}|u|_H = \langle \zeta, u\rangle = J(u) + J^*(\zeta)$
    by \eqref{eq:Fenchel_Young_eq},
i.e., it suffices to check \(pJ(u) =qJ^*(\zeta)\). By equality in \eqref{eq:gap} and \Cref{lemma:RQlemma} we get
    \begin{equation*}
        \left(qJ^*(\zeta)\right)^{\nicefrac{1}{q}}\left(pJ(u)\right)^{\nicefrac{1}{p}} = J(u) + J^*(\zeta)\,.
    \end{equation*}
 By  Young's inequality, this equality is in one-to-one correspondence with \(pJ(u) = qJ^*(\zeta)\). Thus, it holds \(|\zeta|_{H^*}|u|_H = J(u)+J^*(\zeta) = pJ(u)\).
\end{proof}
\subsection{Eigenproblem of the inverse operator}

Consider \(J\in \Gamma_0(X)\). In view of the equivalence \(\zeta\in\partial J(u)\Leftrightarrow u\in\partial J^*(\zeta)\), the subdifferential of the dual functional can be interpreted as the inverse of the subdifferential of the primal functional. 
For a partial differential operator, the inverse operator is often compact, consider e.g., the linear operator \(A = \frac{d}{dx}\), \(A\colon C^1([0,1])\to C^0([0,1])\). Its inverse operator is the integral operator \(A^{-1}\colon C^0([0,1])\to  C^0([0,1]) \) , \(A^{-1}f=\int_0^xf(y)\,dy\), which is compact  by Arzelà-Ascoli.
 We specifically highlight this fact  because in the next section we consider inverse methods in terms of the dual and pose compactness assumptions, which are often satisfied for partial differential operators, since we are working with the dual.
Throughout this subsection we assume the functional \(J\) to be differentiable to avoid the set-valued notation. For energy functionals whose first variation describes a partial differential operator, we will write
\[\zeta = \mathcal{D}(u) \coloneqq\partial J(u)\,.\]
Then, the resolving operator or inverse operator of the partial differential operator is given by
\[u = \mathcal{D}^{-1}(\zeta) = \partial J^*(\zeta)\,.\]
The eigenproblem of the resulting PDE can be described with the framework introduced in this work as
\begin{align*}
    \mathcal{D}(u) - \lambda \partial H(u)\ni0\,,
\end{align*}
where \(\partial H\) is a suitable duality map. 
Its dual counterpart is then given as
\[
\mathcal{D}^{-1}(\zeta)- \mu \partial H^*(\zeta)\ni0\,,
\]
which involves the inverse operator and the inverse duality map. It is equivalent to the primal problem according to \Cref{thm:duality}.
By the theory developed in \Cref{subsect:dual_problem} we observe that the dual problem is the eigenproblem of the inverse operator with \(\mu=\lambda^{1-q}\).
\begin{example}[\(p\)-Laplace]
In the example of the \(p\)-Laplace eigenproblem described in \Cref{ex:p-laplace}, with \(|\cdot|_H = \|\cdot\|_{L^p(\Omega)}\) and \(|\cdot|_{H^*}= \|\cdot\|_{L^q(\Omega)}\), we have the equivalent dual problem
    \[\left(-\Delta_p\right)^{-1}(\zeta) - \mu |\zeta|^{q-2}\zeta = 0 \]
    on the dual space \(X^* = W^{-1,q}(\Omega)\) with \(\mu = \lambda^{1-q}\).
\end{example}
\begin{example}[Matrix eigenproblem]
    The dual eigenproblem of the eigenproblem for symmetric positive definite matrices as in \Cref{ex:spd} is given by 
    \begin{equation*}
        A^{-1}\zeta = \frac{1}{\lambda}\zeta\,.
    \end{equation*}
\end{example}

\section{Gradient flows and inverse power methods}\label{sect:GFandIPM}

In this section, we connect two prominent approaches from the literature which approximate solutions to nonlinear eigenproblems, namely the inverse power method (IPM) and methods based on gradient flows.
In the following, we assume \(J \in \Gamma_0(X)\) to be absolutely \(p\)-homogeneous and \((X, |\cdot|_J)\) to be a reflexive Banach space compactly embedded into the reflexive Banach space \((X, |\cdot|_H)\). We refer to \Cref{ex:plpl_ipm}  below for an explicit example. 

First,  we introduce the inverse power method for nonlinear eigenproblems as defined in the finite dimensional case by Hein and Bühler \cite{Hein_Buehler} with application to spectral clustering and sparse PCA,  or applied in the infinite dimensional setting to the \(p\)-Laplacian by Bozorgnia and Arakelyan in \cite{Farid_B_plpl, Farid_B_2ndplpl}.
The nonlinear inverse power method was analyzed in reflexive Banach spaces for homogeneous functionals to solve an abstract eigenproblem by Ercole  \cite{Ercole2018abstract_NonLinEigValPb}. Given an initial vector \(u^0\in \operatorname{dom}(J)\), \(|u^0|_H=1\), the method can be formulated  as the  iteration scheme
\begin{align}\label{eq:ipm}\tag{IPM}
    \begin{split}
        \zeta^k &\in \partial H(u^k)\\
        u^{k+\nicefrac{1}{2}} &\in \partial J^*(\zeta^k)\\
        u^{k+1} &= \frac{u^{k+\nicefrac{1}{2}}}{|u^{k+\nicefrac{1}{2}}|_H}\,.
    \end{split}
\end{align}
Secondly, RQ minimizing flow-based methods in Hilbert spaces were investigated by Feld et al.\ \cite{Feld_2019}, Aujol et al.\ \cite{Aujol2018_onehomogflow}, Cohen and Gilboa \cite{COHEN2018Gilboa_flow}, and by Nossek and Gilboa \cite{NossekG16flowgenerating}. It has been proven by Bungert and Burger \cite{Bungert2022GFburger} that they are asymptotically equivalent to normalized gradient flows which are, for instance, given by  
\begin{equation}\label{eq:norm_gf}
    \begin{cases}
        v'(t) + \partial J(v(t))\ni 0, &v(0)=f,\\
        u(t) = \frac{v(t)}{|v(t)|_H}\,. &{}
    \end{cases}
\end{equation}
It can be shown that the long-time behavior of the normalized gradient flow is directly related to solutions of nonlinear eigenproblems \cite{Varvaruca2004evoequ, Bungert2022GFburger, Bungert2020asympt}. The flow \eqref{eq:norm_gf} can also be formulated as a doubly nonlinear gradient flow in Banach spaces by using the duality map to replace \(v'\) with \(\partial H(v')\).
The implicit Euler discretization yields
\begin{equation}\label{eq:norm_gf_ELEQ}
    \begin{cases}
        \partial H\left(\frac{u^{k+\nicefrac{1}{2}} - u^k}{\tilde{\tau}}\right) + \partial J\left(u^{k+\nicefrac{1}{2}}\right) \ni 0,\\
        u^{k+1} = \frac{u^{k+\nicefrac{1}{2}}}{|u^{k+\nicefrac{1}{2}}|_H}\,,
    \end{cases}
\end{equation}
where $\tilde{\tau}$ denotes the time step size.  
  The energy linked to the Euler-Lagrange equation \eqref{eq:norm_gf_ELEQ} defines the proximal power method  (PPM) \cite{ambrosioGradientFlowsMetric2008, Bungert2022GFburger}
 with an initial value \(u^0\in X\), \(|u^0|_H=1\) as
\begin{align}
    \begin{cases}
        u^{k+\nicefrac{1}{2}} \in \underset{v\in X}{\operatorname{arg\, min}}\, H(v-u^k) + \tilde{\tau}^{p-1}J(v)\,,\\
    u^{k+1} = \frac{u^{k+\nicefrac{1}{2}}}{|u^{k+\nicefrac{1}{2}}|_H}\,.
    \end{cases} \label{eq:p_prox_method}\tag{PPM}
    \quad 
\end{align}
The existence of the iterations in \((X, \|\cdot\|_X)\) is guaranteed under relatively mild assumptions.
It is enough to require the compactness assumption from above, namely \((X, |\cdot|_J)\) is compactly embedded into \((X, |\cdot|_H)\), and
the lower semi-continuity of \(J\) with respect to \(|\cdot|_H\), see \cite[Section 2.2]{ambrosioGradientFlowsMetric2008}.
We will assume these two properties throughout this section.

In  the sequel, we show that the inverse power method converges towards an eigenfunction, and monotonically increases the dual RQ.
The latter result even holds without homogeneity assumptions on \(J\).
Furthermore, we will show that the proximal power method falls under the broad category of inverse power methods.

\subsection{Convergence of the inverse power method}\label{subsection:conv_ipm}

We start with the monotonic behavior with respect to the dual RQ of the IPM.

\begin{lemma}[Monotonicity of the dual RQ]\label{lemma:monotonicity_dRQ}
    The inverse power method \eqref{eq:ipm} increases the dual Rayleigh quotient in the sense that
    \[R_*(\zeta^k)\leq R_*(\zeta^{k+1})\,.\]
\end{lemma}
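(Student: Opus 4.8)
The plan is to reduce the monotonicity of the dual Rayleigh quotient along the iteration to the monotonicity of the single scalar quantity $J^*(\zeta^k)$, and then exploit the Fenchel--Young identity \eqref{eq:Fenchel_Young_eq} together with the Euler identity \eqref{eq:euler_ident} for the absolutely $p$-homogeneous functional $H$. I would begin by noting that every primal iterate in \eqref{eq:ipm} is normalized, $|u^k|_H = 1$. Since $\zeta^k \in \partial H(u^k)$, relation \eqref{newequation} yields $|\zeta^k|_{H^*} = |u^k|_H^{p-1} = 1$, hence $\zeta^k \neq 0$ (so $R_*(\zeta^k)$ is well-defined) and $H^*(\zeta^k) = \tfrac{1}{q}|\zeta^k|_{H^*}^q = \tfrac{1}{q}$ for every $k$. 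Consequently $R_*(\zeta^k) = q\, J^*(\zeta^k)$, and the claimed inequality $R_*(\zeta^k)\leq R_*(\zeta^{k+1})$ is equivalent to $J^*(\zeta^k) \leq J^*(\zeta^{k+1})$.

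To establish the latter, I would use that $u^{k+\nicefrac12} \in \partial J^*(\zeta^k)$ is, by \eqref{eq:Fenchel_Young_eq}, the same as $\zeta^k \in \partial J(u^{k+\nicefrac12})$, which gives the exact value $J^*(\zeta^k) = \langle \zeta^k, u^{k+\nicefrac12}\rangle - J(u^{k+\nicefrac12})$. On the other hand, testing the supremum defining $J^*$ at the point $u^{k+\nicefrac12}$ gives the lower bound $J^*(\zeta^{k+1}) \geq \langle \zeta^{k+1}, u^{k+\nicefrac12}\rangle - J(u^{k+\nicefrac12})$. The key computation is then: writing $u^{k+\nicefrac12} = |u^{k+\nicefrac12}|_H\, u^{k+1}$ and using $\zeta^{k+1} \in \partial H(u^{k+1})$, the Euler identity \eqref{eq:euler_ident} gives $\langle \zeta^{k+1}, u^{k+1}\rangle = pH(u^{k+1}) = |u^{k+1}|_H^p = 1$, hence $\langle \zeta^{k+1}, u^{k+\nicefrac12}\rangle = |u^{k+\nicefrac12}|_H$. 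Subtracting the two displays and using the dual-norm bound $\langle \zeta^k, u^{k+\nicefrac12}\rangle \leq |\zeta^k|_{H^*}\,|u^{k+\nicefrac12}|_H$ together with $|\zeta^k|_{H^*} = 1$ yields
\[
J^*(\zeta^{k+1}) - J^*(\zeta^k) \;\geq\; |u^{k+\nicefrac12}|_H - \langle \zeta^k, u^{k+\nicefrac12}\rangle \;\geq\; |u^{k+\nicefrac12}|_H - |\zeta^k|_{H^*}\,|u^{k+\nicefrac12}|_H \;=\; 0,
\]
which is the claim. Note that homogeneity of $J$ is never used, consistent with the remark following the statement that the monotonicity holds without homogeneity assumptions on $J$.

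I do not expect a serious obstacle here; the argument is a direct chain of Fenchel--Young, Euler's identity, and the definition of the dual norm. The only point requiring a word of care is the well-posedness of the normalization step, i.e.\ that $u^{k+\nicefrac12} \neq 0$ so that $|u^{k+\nicefrac12}|_H > 0$: since we work on the quotient $X\setminus\mathcal{N}(H)$, the map $|\cdot|_H$ is a genuine norm, and $u^{k+\nicefrac12} = 0$ would force $0 \in \partial J^*(\zeta^k)$, which is excluded whenever the iteration is defined. Everything else is routine.
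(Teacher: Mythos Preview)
Your proof is correct and follows essentially the same route as the paper: reduce to $J^*(\zeta^{k+1})\geq J^*(\zeta^k)$ via $|\zeta^k|_{H^*}=1$, then show $\langle\zeta^{k+1}-\zeta^k,u^{k+\nicefrac12}\rangle\geq 0$ using the Euler identity for $\langle\zeta^{k+1},u^{k+\nicefrac12}\rangle=|u^{k+\nicefrac12}|_H$ and the dual-norm bound for $\langle\zeta^k,u^{k+\nicefrac12}\rangle$. The only cosmetic difference is that the paper invokes the subdifferential inequality for $J^*$ directly, whereas you obtain the same inequality by combining the Fenchel--Young equality at $\zeta^k$ with the Fenchel--Young inequality at $\zeta^{k+1}$.
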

\begin{proof}
For all iterates in \eqref{eq:ipm}, we have that \(|\zeta^k|_{H^*}=|u^k|^{p-1}_H=1\), see \eqref{newequation}.
    Therefore, we just need to show that \(J^*(\zeta^{k+1})\geq J^*(\zeta^k)\).
    By definition of the subdifferential we have
    \[\langle\zeta^{k+1} - \zeta^k, u^{k+\nicefrac{1}{2}}\rangle + J^*(\zeta^k)\leq J^*(\zeta^{k+1})\,.\]
    Thus, it is sufficient to show \(\langle\zeta^{k+1} - \zeta^k, u^{k+\nicefrac{1}{2}}\rangle\geq 0\).
    We note that
    \[\langle \zeta^k, u^{k+\nicefrac{1}{2}}\rangle \leq |\zeta^k|_{H^*}|u^{k+\nicefrac{1}{2}}|_H = |u^{k+\nicefrac{1}{2}}|_H\,.\]
    Again using  \eqref{eq:ipm} we get $|u^{k+\nicefrac{1}{2}}|_H^{p-1}\zeta^{k+1}   \in  \partial H(u^{k+\nicefrac{1}{2}})$ and then \eqref{eq:euler_ident} implies 
        \begin{align*}
        \langle\zeta^{k+1}, u^{k+\nicefrac{1}{2}}\rangle &= \frac{1}{|u^{k+\nicefrac{1}{2}}|_H^{p-1}} \langle |u^{k+\nicefrac{1}{2}}|_H^{p-1} \zeta^{k+1}, u^{k+\nicefrac{1}{2}}\rangle =          \frac{|u^{k+\nicefrac{1}{2}}|_H^p}{|u^{k+\nicefrac{1}{2}}|_H^{p-1}} = |u^{k+\nicefrac{1}{2}}|_H\,.
    \end{align*}
    This concludes the proof.
\end{proof}
Note that the homogeneity of \(J\) was not needed to prove the monotonicity of the dual RQ. Next, we
 analyze the convergence of the inverse power method on \((X, |\cdot|_{H})\) and \((X, |\cdot|_{J})\). For this purpose, we note that,  due to the reflexivity of the spaces, the compact embedding of \((X, |\cdot|_J)\) into \((X, |\cdot|_H)\) translates  to the compact embedding of \((X^*, |\cdot|_{H^*})\) into \((X^*|\cdot|_{J^*})\).   
 
\begin{theorem}[Convergence of the inverse power method]\label{thm:convergence_ipm}
    There exists a sub\-sequence \(( u^{k_l})_{l\in\N} \) of the inverse power method \eqref{eq:ipm}, which converges strongly in \(|\cdot|_H\) and \(|\cdot|_J\) towards a \(p\)-eigenvector of \(J\) and the duality map \(\partial H\) in the sense of  \Cref{def:eigenproblem}.
\end{theorem}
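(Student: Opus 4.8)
The argument is driven by the monotonicity of the dual Rayleigh quotient together with the two compact embeddings, and I would begin by recording the arithmetic of the iteration. Since $\zeta^k\in\partial H(u^k)$ with $|u^k|_H=1$, \eqref{newequation} gives $|\zeta^k|_{H^*}=1$, so $R_*(\zeta^k)=qJ^*(\zeta^k)$; by \Cref{lemma:monotonicity_dRQ} the sequence $\bigl(J^*(\zeta^k)\bigr)_k$ is nondecreasing and by \Cref{lemma:boundedness} it is bounded above, hence $J^*(\zeta^k)\uparrow c$ for some $c>0$ (positivity since $\zeta^0\neq0$ and $J^*$, a power of $|\cdot|_{J^*}$, vanishes only at $0$). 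From $\zeta^k\in\partial J(u^{k+\nicefrac12})$ (equivalent to $u^{k+\nicefrac12}\in\partial J^*(\zeta^k)$), the Euler identity \eqref{eq:euler_ident} and \eqref{eq:Fenchel_Young_eq} give $pJ(u^{k+\nicefrac12})=\langle\zeta^k,u^{k+\nicefrac12}\rangle=qJ^*(\zeta^k)$; in particular $u^{k+\nicefrac12}$ is bounded in $|\cdot|_J$ and $pJ(u^{k+\nicefrac12})\to qc$.

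Next I would show the defect in the iteration vanishes. Re-reading the proof of \Cref{lemma:monotonicity_dRQ}, one has $0\le|u^{k+\nicefrac12}|_H-\langle\zeta^k,u^{k+\nicefrac12}\rangle\le J^*(\zeta^{k+1})-J^*(\zeta^k)$; since $J^*(\zeta^k)$ converges the right-hand side tends to $0$, so $|u^{k+\nicefrac12}|_H-pJ(u^{k+\nicefrac12})\to0$ and therefore $|u^{k+\nicefrac12}|_H\to qc$, bounded away from $0$. Using reflexivity and the compact embedding $(X,|\cdot|_J)\hookrightarrow\hookrightarrow(X,|\cdot|_H)$ I then extract a subsequence with $u^{k_l+\nicefrac12}\rightharpoonup u^*$ weakly in $(X,|\cdot|_J)$ and $u^{k_l+\nicefrac12}\to u^*$ strongly in $(X,|\cdot|_H)$ (the two limits coinciding); dually, the compact embedding $(X^*,|\cdot|_{H^*})\hookrightarrow\hookrightarrow(X^*,|\cdot|_{J^*})$ lets me also assume $\zeta^{k_l}\to\zeta^*$ strongly in $|\cdot|_{J^*}$ and $\rightharpoonup\zeta^*$ weakly in $|\cdot|_{H^*}$.

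It remains to identify $u^*$ as an eigenvector. Passing to the limit in the subgradient inequality for $\zeta^{k_l}\in\partial J(u^{k_l+\nicefrac12})$ — legitimate since $\langle\zeta^{k_l},v\rangle\to\langle\zeta^*,v\rangle$, $\langle\zeta^{k_l},u^{k_l+\nicefrac12}\rangle\to\langle\zeta^*,u^*\rangle$ (a strong$\,\times\,$weak pairing, using boundedness of $|u^{k_l+\nicefrac12}|_J$) and $J$ is weakly lower semicontinuous — yields $\zeta^*\in\partial J(u^*)$. From the previous step $|u^*|_H=qc>0$ (so $u^*\neq0$) and $\langle\zeta^*,u^*\rangle=\lim pJ(u^{k_l+\nicefrac12})=qc=|u^*|_H$. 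Weak lower semicontinuity gives $|\zeta^*|_{H^*}\le\liminf|\zeta^{k_l}|_{H^*}=1$, while $|\zeta^*|_{H^*}|u^*|_H\ge\langle\zeta^*,u^*\rangle=|u^*|_H$ forces $|\zeta^*|_{H^*}\ge1$; hence $|\zeta^*|_{H^*}=1$ and $\langle\zeta^*,u^*\rangle=|\zeta^*|_{H^*}|u^*|_H$, which by the equality case of this estimate means $|u^*|_H^{p-1}\zeta^*\in\partial H(u^*)$. Setting $\lambda:=|u^*|_H^{1-p}=(qc)^{1-p}>0$ we obtain $\tfrac{\zeta^*}{\lambda}\in\partial H(u^*)$ together with $\zeta^*\in\partial J(u^*)$, i.e.\ $\zeta^*-\lambda\,\partial H(u^*)\ni0$, with $\lambda=\tfrac{pJ(u^*)}{|u^*|_H^p}=R(u^*)$, so $u^*$ is a $p$-eigenvector of $J$ and $\partial H$ as in \Cref{def:eigenproblem}. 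Finally, $pJ(u^{k_l+\nicefrac12})\to pJ(u^*)$ gives $|u^{k_l+\nicefrac12}|_J\to|u^*|_J$, and combined with the weak convergence in $(X,|\cdot|_J)$ and the Radon--Riesz property (valid, e.g., when $|\cdot|_J$ is uniformly convex, as for the $W^{1,p}$-norms of our examples) this upgrades to strong convergence in $|\cdot|_J$; normalizing, $u^{k_l+1}=u^{k_l+\nicefrac12}/|u^{k_l+\nicefrac12}|_H\to u^*/|u^*|_H$ strongly in both $|\cdot|_H$ and $|\cdot|_J$, and $u^*/|u^*|_H$ is again an eigenvector by absolute $p$-homogeneity.

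The main obstacle is the interplay between monotonicity and the weak limit: \Cref{lemma:monotonicity_dRQ} delivers only a telescoping, Cauchy-type bound on the defect $|u^{k+\nicefrac12}|_H-\langle\zeta^k,u^{k+\nicefrac12}\rangle$, and one must then exploit the equality cases in the Fenchel--Young and Cauchy--Schwarz estimates for $J$ and for $H$ \emph{simultaneously} at a limit that is only attained weakly in $|\cdot|_J$, so as to recover both inclusions $\zeta^*\in\partial J(u^*)$ and $\tfrac{\zeta^*}{\lambda}\in\partial H(u^*)$. The compact embeddings are precisely what turn the available weak convergence of the dual variable into the strong $|\cdot|_{J^*}$-convergence needed to control the pairing $\langle\zeta^{k_l},u^{k_l+\nicefrac12}\rangle$ in the limit; the strong convergence in $|\cdot|_J$ is a secondary technical point, requiring a Radon--Riesz-type property of that norm.
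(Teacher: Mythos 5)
Your proof is correct, and while its skeleton (boundedness, compact embeddings, subsequence extraction, limit passage in subdifferential inequalities) matches the paper's, the key identification step is genuinely different. The paper's proof is a pure compactness argument: it never invokes \Cref{lemma:monotonicity_dRQ}, but instead obtains $\bar{\zeta}\in\partial H(\bar{u})$ by passing to the limit in the subdifferential inequality of $H$ at the normalized iterates $u^k$ (using strong $|\cdot|_H$-convergence of $u^k$ and weak $|\cdot|_{H^*}$-convergence of $\zeta^k$), and separately $\bar v\in\partial J^*(\bar\zeta)$, then assembles the eigenvector equation via $\bar u=\bar v/|\bar v|_H$ and homogeneity; along the way it upgrades to strong convergence of $\zeta^k$ in $|\cdot|_{H^*}$ and of $v^k$ in $|\cdot|_J$ by the same ``weak convergence plus convergence of norms'' argument you use at the end. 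You instead exploit the monotone convergence of $J^*(\zeta^k)$ and the telescoping bound $0\le |u^{k+\nicefrac12}|_H-\langle\zeta^k,u^{k+\nicefrac12}\rangle\le J^*(\zeta^{k+1})-J^*(\zeta^k)$ to force the alignment $\langle\zeta^*,u^*\rangle=|\zeta^*|_{H^*}|u^*|_H$ in the limit, and recover the $\partial H$-inclusion from the equality cases of Cauchy--Schwarz and Young directly at the half-step limit $u^*$. This buys you two things: quantitative information along the whole sequence (convergence of the dual Rayleigh quotients and vanishing of the defect, not just subsequential statements), and the elimination of one Radon--Riesz application (you only need weak $|\cdot|_{H^*}$-convergence of $\zeta^{k_l}$, where the paper needs strong). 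The one remaining use of the Kadec--Klee/Radon--Riesz property, to upgrade to strong convergence in $|\cdot|_J$, is shared with the paper (which uses it silently); you are right to flag that it is an additional structural assumption on the norm, satisfied in the uniformly convex examples the paper has in mind. Your justification that $c>0$ is slightly glib (one should note $|\zeta^0|_{J^*}\ge\langle\zeta^0,u^0\rangle/|u^0|_J=1/|u^0|_J>0$ using $u^0\in\operatorname{dom}(J)$), but this is easily patched and the paper does not address the point at all.
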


\begin{proof}
    By the iterative scheme \eqref{eq:ipm} we obtain the sequences \((u^k)_{k\in\N}\subset X\), \((u^{k + \nicefrac{1}{2}})_{k\in\N}\subset X\), and \((\zeta^k)_{k\in\N}\subset X^*\) with their relation \(\zeta^k\in \partial H(u^k)\).
    For the sake of brevity, we further define the sequence \(v^k\coloneqq u^{k+\nicefrac{1}{2}}\).
    From this it follows that \(u^{k+1} = \frac{v^k}{|v^k|_H}\) and \(v^k \in \partial J^*(\zeta^k)\) .\\
    The proof is performed in two steps.
    First, we show the convergence of the iterates in the two relevant norms.
    In a second step, we show that the limit satisfies the nonlinear eigenvector condition.

\emph{Step 1.} The convergence is proven with a standard argument in which we show boundedness of the sequences and then, with a compactness argument, we obtain the convergence of a subsequence, which we do not relabel for notational convenience.\\
    We remind ourselves that it holds \(|\zeta|_{J^*}^{q-1} = |v|_J\) and \(| u|_H = |\zeta|_{H^*}^{q-1}\) for \(v\in \partial J^*(\zeta)\) and \(\zeta\in \partial H(u)\).
    This will be our key idea to translate boundedness between the spaces.
    We observe from the scheme \eqref{eq:ipm} that \(|u^k|_H = 1 = |\zeta^k|_{H^*}\). Thus, the sequences \((u^k)_{k\in\N}\subset X\) and \((\zeta^k)_{k\in\N}\subset X^*\) are bounded in \((X, |\cdot|_H)\) and \((X^*,|\cdot|_{H^*})\), respectively.

    With the compactness of the embedding there exists a subsequence of \( (\zeta^k)_{k\in\N}\) which converges to some $ \bar{\zeta}$ strongly in \((X^*, |\cdot|_{J^*})\) and weakly in \((X^*, |\cdot|_{H^*})\).
    Furthermore, \((\zeta^k)_{k\in\N}\subset X^*\) is bounded in \((X^*,|\cdot|_{J^*})\).
    With \(|v^k|_J = |\zeta^k|_{J^*}^{q-1}\) we easily see that \((v^k)_{k\in\N}\) is bounded in \((X,|\cdot|_J)\).
    We once again use the compact embedding and deduce that \((v^k)_{k\in\N}\) is bounded in \((X,|\cdot|_H)\) and has a converging subsequence in \((X,|\cdot|_H)\).
   Recalling \(u^{k+1} = \frac{v^k}{|v^k|_H}\), we thus see that \((u^k)_{k\in\N}\) has a converging subsequence in \((X,|\cdot|_H)\) whose limit we denote by $\bar{u}$.
   
    It remains to prove that \((u^k)_{k\in\N}\) is bounded and has a subsequence which converges to $\bar{u}$ also in \((X,|\cdot|_J)\). As a byproduct, we also check  that \((\zeta^k)_{k\in\N}\) has a  subsequence converging to $\bar{\zeta}$ in \((X^*,|\cdot|_{H^*})\).
    To do so, we will show that the convergence of \((u^k)_{k\in\N}\) in \((X,|\cdot|_H)\) and the boundedness of \((\zeta^k)_{k\in\N}\) in \((X^*,|\cdot|_{H^*})\) with \(\zeta^k\in\partial H(u^k)\) implies the convergence of \((\zeta^k)_{k\in\N}\) in \((X^*,|\cdot|_{H^*})\).
    For this, we  use the definition of the subdifferential
    \[H(u^k) - \langle\zeta^k,u^k\rangle \leq H(v) - \langle \zeta^k, v\rangle \quad \forall v\in X\,.\]
Passing to the limit, we see that \(\bar{\zeta}\in\partial H(\bar{u})\) because of the strong convergence of \((u^k)_{k\in\N}\) to $\bar{u}$ in \((X,|\cdot|_{H})\) and the weak convergence  of \((\zeta^k)_{k\in\N}\) to $\bar{\zeta}$  in \((X^*,|\cdot|_{H^*})\).
The property \(\bar{\zeta}\in\partial H(\bar{u})\) implies  $|\bar{\zeta}|_{H^*} = |\bar{u}|_H^{p-1}$. Together with $|u^k|^{p-1}_H =  |\zeta^k|_{H^*}$ for all $k \in \N$ and $|\bar{u}|_H = \lim_{k\to\infty}|u^k|_H$, we find $|\bar{\zeta}|_{H^*} = \lim_{k\to\infty}|\zeta^k|_{H^*}$. This along with the weak convergence  of \((\zeta^k)_{k\in\N}\)   to $\bar{\zeta}$  implies that \((\zeta^k)_{k\in\N}\)  converges to $\bar{\zeta}$ strongly in \((X^*,|\cdot|_{H^*})\).     
  
  We now use the same argument to show convergence of a subsequence of \((v^k)_{k\in\N}\) in \((X, |\cdot|_J)\) to some $\bar{v}$. In fact, we know that \((v^k)_{k\in\N}\) is bounded in  \((X, |\cdot|_J)\) (and thus weakly converging, up to a further subsequence) and \(v^k\in\partial J^*(\zeta^k)\), with \((\zeta^k)_{k\in\N}\) strongly converging in  \((X^*, |\cdot|_{J^*})\).
    Passing the limit in
  \[J^*(\zeta^k)-\langle\zeta^k, v^k\rangle\leq J^*(\eta)-\langle\eta, v^k\rangle\quad \forall \eta\in X^*\]
  yields  $\bar{v} \in \partial J^*(\bar{\zeta})$ thus \(|\bar{\zeta}|_{J^*}=|\bar{v}|_J^{p-1}\). With \(|\bar{\zeta}|_{J^*} = \lim_{k\to\infty}|\zeta^k|_{J^*}\) and \(|\zeta^k|_{J^*} = |v^k|_J^{p-1}\) for all \(k\in\N\), we find \(|\bar{v}|_J = \lim_{k\to\infty}|v^k|_J\). This along with weak convergence provides the strong convergence in  \((X, |\cdot|_J)\) of $v^k$ to $\bar{v}$.
 Eventually, the strong convergence of \((v^k)_{k\in\N}\) in \((X, |\cdot|_J)\) together with the definition \(u^{k+1} = \frac{v^k}{|v^k|_H}\)  implies that also \((u^k)_{k\in\N} \) converges strongly in \((X, |\cdot|_J)\), with limit $\bar{u} = \frac{\bar{v}}{|\bar{v}|_H}$.   Note that, as a byproduct of the above argument, we get $\bar{v} \in \partial J^*(\bar{\zeta})$.

\emph{Step 2.} We now prove that the limits fulfill the nonlinear eigenvector equation \eqref{eq:eigenproblem}.
    Due to the strong convergence of \((\zeta^k)_{k\in\N}\) in both topologies, we have that \(R_*(\zeta^k)\to R_*(\bar{\zeta)}\).
We observe that \(R_*(\bar{\zeta)} = \frac{qJ^*(\bar{\zeta)}}{|\bar{\zeta}|_{H^*}^q} = \frac{\langle\bar{\zeta}, \bar{v}\rangle}{1}\), due to the Euler identity \eqref{eq:euler_ident}, $\bar{v} \in \partial J^*(\bar{\zeta})$, and \(|\bar{\zeta}|_{H^*}= \lim_{k \to \infty}|\zeta^k|_{H^*} = 1\).
Furthermore, it holds that \(\langle\bar{\zeta}, \bar{v}\rangle = \langle\bar{\zeta},|\bar{v}|_H \bar{u}\rangle =  |\bar{v}|_H\) by \(\bar{u} = \frac{\bar{v}}{|\bar{v}|_H}\) and \(\langle\bar{\zeta}, \bar{u}\rangle = |\bar{u}|^p_H = 1 \), where we use that \(\bar{\zeta}\in\partial H(\bar{u})\).
We therefore have
\begin{align*}
 R_*(\bar{\zeta}) =  |\bar{v}|_H \,.
 \end{align*}
Hence, setting $u := \bar{u}$ and $\zeta := |\bar{v}|_H^{1-p} \bar{\zeta}$ and by  recalling that  $\bar{v} \in \partial J^*(\bar{\zeta}) \Leftrightarrow \bar{\zeta} \in \partial J(\bar{v})$ and \(\bar{\zeta}\in\partial H(\bar{u})\), we conclude that     \({\zeta}\in\partial J({u})\) and that $\zeta$ fulfills  the  eigenvector condition $ \zeta \in \lambda\partial H(u)$ with $\lambda := |\bar{v}|_H^{1-p}$. It remains to show that $\lambda = R(u)$.
 With the Euler identity \eqref{eq:euler_ident} we have \(pJ(u) = \langle\zeta, u\rangle\) and \(pH(u) = \langle\frac{\zeta}{\lambda}, u\rangle \), and thus \[R(u) = \frac{J(u)}{H(u)} = \frac{\langle\zeta, u\rangle}{\langle \frac{\zeta}{\lambda}, u\rangle} = \lambda\,.\]
This concludes the proof.
\end{proof}
\begin{remark}\label{remark:groundstate_sign}
    This result does not guarantee that the nonlinear eigenfunction approximated with the inverse power method is the ground-state eigenfunction.
    In practice, to guarantee convergence towards the ground-state eigenfunction, one can proceed as follows.
    First, one has to look at the special structure of the ground state, e.g., the ground state of the  \(p\)-Laplacian is the only non-sign changing eigenfunction.
    Then, one has to check whether the operator \(\partial H\) and \(\partial J^*\) are preserving this structure.
   Note, however, that this approach provides sufficient criteria for the convergence towards ground states, which are in general not necessary, as will be shown in the numerical examples, where we compute the \(p\)-Laplacian ground state from a sign-changing initial guess.
\end{remark}

\begin{example}\label{ex:plpl_ipm}
Let us come back to the initial   \Cref{ex:p-laplace} of the \(p\)-Laplacian in the space \(W_0^{1,p}(\Omega)\) for a bounded domain \(\Omega\subset\R^n\), and let us apply \Cref{thm:convergence_ipm}.
    First, we need to check the assumptions in this setting.
    We have the uniformly convex and smooth reflexive Banach space \(X=W_0^{1,p}(\Omega)\) with its dual space \(X^* = W^{-1,q}(\Omega)\).
    The functionals are given as the power of norms, i.e., \(J(u) = \frac{1}{p}\|\nabla u\|_{L^p(\Omega)}^p\) and \(H(u) = \frac{1}{p}\|u\|_{L^p(\Omega)}^p\).
    The dual norm \(|\cdot|_{H^*}\) is thus given by the \(L^q\) norm. 
    The assumptions require \(W^{1,p}_0(\Omega)\) to be compactly embedded in \(L^p(\Omega)\), which is true due to the Rellich-Kondrachov theorem.  
    The inverse power method is thus applicable to the \(p\)-Laplace eigenproblem \eqref{eq:p-laplace} and converges towards an eigenfunction.
\end{example}

\subsection{Proximal power method as inverse power method}
We define the update step in \eqref{eq:p_prox_method} as the proximal operator with \(\tau := \tilde{\tau}^{p-1}\)
\begin{align}\label{proxop}
P_\tau(u) \coloneqq \underset{v\in X}{\operatorname{arg\, min}}\, H(v-u) + \tau J(v)\,.
\end{align}
The proximal operator is the solution to the minimization problem of the Moreau envelope of \(J\), given by
\[J_\tau(u) \coloneqq \underset{v\in X}{\operatorname{inf}}\, \frac{1}{\tau}H(v-u) + J(v)\,.\]
We note that this infimum is unique due to the strict convexity of \(H\) and \(J\) on \(X\).
We remind ourselves that we are working with powers of norms and have set without further restrictions \(X  =X\setminus\{\mathcal{N}(J)\cup \mathcal{N}(H)\}\).
The related Yosida approximation\footnote{For \(\tau \to 0\) the Yosida approximation is an element of \(\partial J\) with smallest \(|\cdot|_{H^*}\) norm in the sense \(|A_{ 0}(u)|_{H^*}\leq |\zeta|_{H^*} \) for all \(\zeta\in\partial J(u)\).} \cite{evans10} is defined as
\[A_\tau(u) \coloneqq \frac{\partial H\left(u - P_\tau(u)\right)}{\tau}\,.\]
The Yosida approximation gives us the connection of the proximal power method with the inverse power method.
The proximal operator step is an inverse iteration with duality mapping \(A_\tau\).

\begin{proposition}[Proximal step as inverse iteration]\label{prop:prox_step_inv_it}
    For \(u\in X\) and \(\tau>0\), there exists \(\eta\in A_\tau(u)\) such that
    \begin{align*}
        P_\tau(u) &\in \partial J^*(\eta)
    \end{align*}
    and for the same \(\eta\in A_\tau(u)\) it holds that
    \[\eta\in \partial J_\tau(u)\,.\]
\end{proposition}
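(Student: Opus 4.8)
The plan is to read off a single subgradient from the first-order optimality condition for the proximal minimizer, recognize it as an admissible element $\eta$, and then establish the two inclusions separately — the first via the Fenchel--Young identity, the second by a direct subgradient computation for the Moreau envelope.

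Write $\bar v := P_\tau(u)$, the minimizer of the strictly convex functional $v \mapsto H(v-u) + \tau J(v)$. First I would invoke the sum rule for subdifferentials to obtain $0 \in \partial H(\bar v - u) + \tau\,\partial J(\bar v)$, i.e.\ there is $\xi \in \partial H(\bar v - u)$ with $\eta := -\xi/\tau \in \partial J(\bar v)$. Next I would check $\eta \in A_\tau(u)$: since $H$ is absolutely $p$-homogeneous it is even, hence $\partial H(u - \bar v) = -\partial H(\bar v - u)$, so $-\xi \in \partial H(u - \bar v)$ and therefore $\eta = \tfrac1\tau(-\xi) \in \tfrac1\tau \partial H(u - P_\tau(u)) = A_\tau(u)$. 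The first claimed inclusion $P_\tau(u) \in \partial J^*(\eta)$ is then immediate: $\eta \in \partial J(\bar v) = \partial J(P_\tau(u))$, and the Fenchel--Young identity \eqref{eq:Fenchel_Young_eq} converts $\eta \in \partial J(P_\tau(u))$ into $P_\tau(u) \in \partial J^*(\eta)$.

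For the second claim $\eta \in \partial J_\tau(u)$, I would verify the subgradient inequality by hand, using that $\bar v$ also realizes the infimum defining $J_\tau(u)$, so that $J_\tau(u) = \tfrac1\tau H(\bar v - u) + J(\bar v)$. Fix $u' \in X$ and an arbitrary competitor $v \in X$. Adding $\tfrac1\tau$ times the subgradient inequality for $H$ at $\bar v - u$ with slope $\xi$ (evaluated at $v - u'$) to the subgradient inequality for $J$ at $\bar v$ with slope $\eta = -\xi/\tau$ (evaluated at $v$), the $v$-dependent terms cancel and one is left with $\tfrac1\tau H(v-u') + J(v) \ge J_\tau(u) + \langle \eta, u' - u\rangle$. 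Taking the infimum over $v \in X$ on the left yields $J_\tau(u') \ge J_\tau(u) + \langle \eta, u' - u\rangle$ for every $u' \in X$, which is exactly $\eta \in \partial J_\tau(u)$.

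The step I expect to be most delicate is the sum rule that produces the common subgradient $\xi$: in an infinite-dimensional reflexive Banach space with $\operatorname{dom}(H)$ and $\operatorname{dom}(J)$ possibly different, the exact additivity $\partial\big(H(\cdot - u) + \tau J\big) = \partial H(\cdot - u) + \tau\,\partial J$ needs a constraint qualification, which one should either draw from the standing continuity/compactness assumptions of this section (continuity of $J$ on $\operatorname{dom}(J)$ and of $H$ on $\operatorname{dom}(H)$) or circumvent by arguing directly from the variational inequality characterizing $\bar v$. The sign bookkeeping in passing between $\partial H(\bar v - u)$ and $\partial H(u - \bar v)$, while elementary, is the other place where care is needed.
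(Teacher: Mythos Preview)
Your proof is correct. The first half — extracting $\eta$ from the optimality condition for $P_\tau(u)$, checking $\eta\in A_\tau(u)$ via the evenness of $H$, and deducing $P_\tau(u)\in\partial J^*(\eta)$ by Fenchel--Young — is exactly what the paper does.

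For the second inclusion $\eta\in\partial J_\tau(u)$ the two arguments diverge. The paper passes to the dual: it uses that the infimal convolution defining $J_\tau$ dualizes to a sum, so $J_\tau^*(\zeta)=\tau^{q-1}H^*(\zeta)+J^*(\zeta)$ and hence $\partial J_\tau^*(\eta)=\partial H^*(\tau\eta)+\partial J^*(\eta)$; then $u-P_\tau(u)\in\partial H^*(\tau\eta)$ and $P_\tau(u)\in\partial J^*(\eta)$ give $u\in\partial J_\tau^*(\eta)$, which is the desired inclusion by Fenchel--Young. Your route is more elementary: you stay on the primal side and verify the subgradient inequality for $J_\tau$ directly by summing the subgradient inequalities for $H$ and $J$ and exploiting the cancellation coming from $\eta=-\xi/\tau$. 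Your argument avoids the duality formula for the Moreau envelope entirely and would work for any convex $H$ and $J$ (not just $p$-homogeneous $H$), whereas the paper's computation leans on the $q$-homogeneity of $H^*$ to rewrite $\tau^{q-1}\partial H^*(\eta)=\partial H^*(\tau\eta)$; on the other hand, the paper's approach makes the structure $J_\tau^*=\tau^{q-1}H^*+J^*$ explicit, which it reuses immediately afterwards.

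Your caveat about the subdifferential sum rule is fair and applies equally to the paper, which invokes it without comment; here it is justified because $H$ is finite and continuous on all of $X$.
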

\begin{proof}
By the definition of the proximal operator in \eqref{proxop} it holds that the Minkowski sum of the subdifferentials share a common element in the sense
\begin{align*}
    \partial H(P_\tau(u) - u) + \tau\partial J(P_\tau(u))&\ni0\\
    \Leftrightarrow -A_\tau(u) + \partial J(P_\tau(u))&\ni 0 \\        \Leftrightarrow A_\tau(u) \cap \partial J(P_\tau(u))&\neq \emptyset .
\end{align*}
We now choose \(\eta\in A_\tau(u)\) such that \(\partial J(P_\tau(u))\ni\eta\ \).

Let us prove that \(\eta\in \partial J_\tau(u)\). Note that  \(J_\tau\) lies in \(\Gamma_0(X)\) and that it is absolutely \(p\)-homogeneous, due to the definitions of \(J\) and \(H\).
    Its dual functional is given by
   \begin{align}\label{jstern}
   J_\tau^*(\zeta) = \tau^{q-1}H^*(\zeta) + J^*(\zeta)\,,
   \end{align}
    because the infimal convolution is the dual operation to the addition \cite[Theorem 16.4]{Rockafellar1970convana}.
Here, we also used $(\tau^{-1} H)^* = \tau^{q-1} H^*$ due to the $q$-homogeneity of $H^*$.
We deduce that the subdifferential of the dual functional is given by
   \begin{align}\label{jstern2}
   \partial J^*_\tau(\zeta) = \tau^{q-1}\partial H^*(\zeta) + \partial J^*(\zeta)\,.
   \end{align}
We now insert the  \(\eta\in A_\tau(u)\) chosen above (which satisfies \(\partial J(P_\tau(u))\ni\eta\) and thus equivalently \(P_\tau(u)\in\partial J^*(\eta)\)) into the dual functional \(\partial J^*_\tau(\eta)\). With the homogeneity of \(\partial H\) we get
\begin{align*}
    \partial J^*_\tau(\eta) &= \tau^{q-1}\partial H^*(\eta) + \partial J^*(\eta)\\
    &= \partial H^*(\tau\eta) + \partial J^*(\eta)\ni u-P_\tau(u) + P_\tau(u) = u\,,
\end{align*}
where we used that \(\partial H(u-P_\tau(u)) = \tau A_\tau(u) \ni \tau \eta\). This concludes the proof.
\end{proof}

\begin{remark}
    In the case of \(J\) and \(H\) being differentiable, the subdifferentials are single valued and thus we have \(A_\tau(u) = \partial J_\tau(u)\), called the Moreau theorem \cite[Proposition IV.1.8]{moreau_theorem}.
\end{remark}

Using the definition of $P_\tau$, the proximal power method as defined above in \eqref{eq:p_prox_method} can be written as
\begin{align*}
    u^{k+\nicefrac{1}{2}} &= P_\tau(u^k)\\
    u^{k+1} &=\frac{u^{k+\nicefrac{1}{2}}}{|u^{k+\nicefrac{1}{2}}|_H}\,.
\end{align*}
Thus, using  the inclusions  \(\eta\in \partial J_\tau(u)\) and  \(P_\tau(u) \in \partial J^*(\eta)\) given by  \Cref{prop:prox_step_inv_it}, we can write it as the inverse power method with dual mapping \(\partial J_\tau\) in place of \(\partial H\), namely
\begin{align*}\label{eq:prox_ipm}\tag{prox-ipm}
\begin{split}
    \zeta^k &\in \partial J_\tau(u^k)\\
    u^{k+\nicefrac{1}{2}} &\in \partial J^*(\zeta^k)\\
    u^{k+1} &=\frac{u^{k+\nicefrac{1}{2}}}{|u^{k+\nicefrac{1}{2}}|_H}.
\end{split}
\end{align*}
Strictly speaking,  \(|\cdot|_H\) should be replaced with \(|\cdot|_{J_\tau}\), but these two norms are equivalent on \(X\), due to the growth assumption \eqref{eq:growth_coercivity} on \(J\) and \(H\).
With the homogeneity of \(J\), \(H\), and \( J_\tau\), the iterates of \eqref{eq:prox_ipm} are equivalent up to scaling for both normalizations \(|\cdot|_H\) and \(|\cdot|_{J_\tau}\).
Thus, \eqref{eq:prox_ipm} describes the inverse power method, which solves the eigenproblem
\begin{equation}\label{eq:ppm_eigenproblem}
    \partial J(u) - \lambda_\tau \partial J_\tau(u)\ni 0\,.
\end{equation}
More precisely, as discussed in \Cref{subsection:conv_ipm}, this method  increases the dual RQ
\begin{equation}\label{eq:ppm_drq}
    R^\tau_*(\zeta) := \frac{J^*(\zeta)}{J^*_\tau(\zeta)}
\end{equation}
monotonously until we reach a steady state, which solves the nonlinear eigen\-problem.
To conclude this discussion, we check that the eigenproblem  \eqref{eq:ppm_eigenproblem}  for \(\tau>0\) is equivalent to the primal eigenproblem in \Cref{def:eigenproblem}. This is achieved by analyzing the dual RQ
of \Cref{def:dRQ} and \eqref{eq:ppm_drq}.
Bungert and Burger already proved this by characterizing eigenvectors as scaled fixed points of the proximal operator \cite[Theorem 5.2]{Bungert2022GFburger}.
We will present an alternative proof via the dual problem.
The eigenproblem solved by the proximal power method is given by
\[\partial J(u) - \lambda_\tau \partial J_\tau(u) \ni 0,\]
 with primal RQ \(\lambda_\tau = R_\tau(u) = \frac{J(u)}{J_\tau(u)}\). By using \eqref{jstern2} and \Cref{thm:duality}, its equivalent dual problem with dual eigenvalue \(\mu_\tau = \lambda_\tau^{1-q}\) reads  as
 \begin{align*}
 \partial J^*(\zeta) - \mu_\tau\partial J_\tau^*(\zeta) &\ni 0\\
    \Leftrightarrow \partial J^*(\zeta) - \mu_\tau\left(\tau^{q-1}\partial H^*(\zeta) + \partial J^*(\zeta)\right) &\ni 0\\
        \Leftrightarrow \partial J^*(\zeta) - \frac{\mu_\tau\tau^{q-1}}{1-\mu_\tau}\partial H^*(\zeta) &\ni 0.
\end{align*}
This shows that the dual eigenproblem of \eqref{eq:ppm_eigenproblem} is equivalent to the dual eigenproblem of \Cref{def:eigenproblem} with \(\mu = \frac{\mu_\tau\tau^{q-1}}{1-\mu_\tau}\).
In this context, we note that the dual RQ \eqref{eq:ppm_drq} is bounded from above by $1$, and thus \(\mu_\tau<1\), i.e.,  division by zero is impossible. We verify this by using the dual formulation of the growth assumption \eqref{eq:growth_coercivity} given by \( J^*(\zeta) \leq \lambda_*^{1-q}H^*(\zeta)\), see \Cref{lemma:boundedness}.
Recalling \eqref{jstern}, for \eqref{eq:ppm_drq} we indeed get \begin{equation*}
     R_*^\tau(\zeta) = \frac{J^*(\zeta)}{\tau^{q-1}H^*(\zeta) + J^*(\zeta)} \leq \frac{J^*(\zeta)}{((\tau\lambda_*)^{q-1}+1)J^*(\zeta)} = \frac{1}{(\tau\lambda_*)^{q-1}+1} \: < \: 1.
 \end{equation*}
Transferring the result of the equivalence of the two dual problems to the primal problems, by \Cref{thm:duality} we get  the equivalence of the eigenproblem \eqref{eq:ppm_eigenproblem} (which gets solved by the \eqref{eq:p_prox_method}) and \Cref{def:eigenproblem} with eigenvalue
$$\lambda 
= \frac{\lambda_\tau}{\tau}\left(  1-\lambda_\tau^{1-q}   \right)^{p-1}.$$

This is consistent with the formulation of Bungert and Burger \cite[Theorem 5.2]{Bungert2022GFburger}, who characterized fixed points of the proximal operator in the sense \(\lambda_\tau^{1-q}u=P_\tau(u)\) as eigenvectors with eigenvalue \(\lambda\).

\begin{remark}
    The Moreau envelope of \(J\) is the infimal convolution of \(\frac{1}{\tau}H\) and \(J\) and sometimes called the Moreau regularization of \(J\).
    With a similar argument as above we see the regulatory effect of the Moreau regularization on \(J\) with the aid of the nonlinear eigenvalues.
    The eigenvectors of \(J\) with Rayleigh quotient \(R(u) = \frac{J(u)}{H(u)}\) and \(J_\tau\) with Rayleigh quotient \(R_\tau(u)  =\frac{J_\tau(u)}{H(u)}\) are the same, but the eigenvalues of the regularization \(J_\tau\) get scaled as
    \[\lambda_\tau = \frac{\lambda}{(1+(\tau\lambda)^{q-1})^{p-1}}\,.\]
    Thus, they are asymptotically bounded from above by \(\frac{1}{\tau}\) and it holds that \(\lambda_\tau<\lambda\).
    For \(\lambda \ll \frac{1}{\tau}\) we have \(\lambda_\tau\approx\lambda\) and for \(\lambda \gg\frac{1}{\tau}\) we have \(\lambda_\tau \approx\frac{1}{\tau}\).
    A rescaling of the eigenvalues due to the application of a regularization is well known in linear inverse problems \cite{Kirsch_Intro_IP}, for example in Lavrentiev or Tikhonov regularization.
    We see that a rescaling happens as well for the nonlinear eigenvalues due to the Moreau regularization.
\end{remark}

\section{Numerical experiments}\label{sect:Numeric}
In this section, we numerically validate the convergence of the inverse power method introduced in \Cref{sect:GFandIPM} with the example of the \(p\)-Laplace eigenproblem.
Furthermore, we discuss approaches to compute higher-order eigenfunctions.
The python scripts for the implementation can be found on \href{https://github.com/JonaLbm/Duality_Perspective_on_Nonlinear_Eigenproblems_Computation_of_Eigenfunctions}{GitHub} \cite{github_repo}.

\subsection{Numerical validation of the inverse power method}\label{subsect:num_val_ipm}
The inverse power method \eqref{eq:ipm} is numerically implemented by choosing the functional \(J\) as the \(p\)-Dirichlet energy and the functional \(H\) as the \(p\)-th power of the \(L^p\)-norm.
The resulting pseudo code can be seen in \Cref{alg:IPM}.
\begin{algorithm}
\caption{Inverse power method for \(p\)-Laplacian}\label{alg:IPM}
\begin{algorithmic}
\Require $u^0$
\Ensure $\|u^0\|_{L^p} = 1$
\State $k =0$
\While{$k <30$}
    \State $\zeta^k \gets |u^k|^{p-2}u^k$ \Comment{Map \(u^k\) to the dual space}
    \State solve $-\Delta_p(u^{k+\nicefrac{1}{2}}) = \zeta^k$ \Comment{Solve the inner \(p\)-Dirichlet problem}
    \State  \(u^{k+1} \gets \frac{u^{k+\nicefrac{1}{2}}}{\|u^{k+\nicefrac{1}{2}}\|_{L^p}}\) \Comment{Normalize the iterate}
    \State \(k\gets k+1\)
\EndWhile
\end{algorithmic}
\end{algorithm}

We consider an L-shaped domain $(-1,1)^2 \setminus (0,1)^2$ and use a spatial discretization  within the square $(-1,1)^2$ of size  \(h=0.025\).
The \(p\)-Laplace operator is approximated  using a mean value formula \cite{Teso2021meanvalue}, and discretized with a finite differences scheme by del Teso and Lindgren \cite{delTeso2021}, which is similar to the \(p\)-Laplace formulation on graphs \cite{Daniel_Graph}.
The discretized two-dimensional \(p\)-Laplace operator is given by 
\begin{equation}\label{eq:disc p-lapl}
    \Delta_p^{h}(u(x)) = \frac{h^2}{D_{2,p}\pi r^{p+2}}\sum_{y\in B_r (x)} |(u(y) - u(x)|^{p-2}(u(y) - u(x)), \,
\end{equation}
where \(D_{2,p}\) is a constant, \(r\) the mean value radius, and \(h\) the spatial discretization. 
This provides a consist discretization if
\begin{equation}
    \label{eq:consistency} h = \begin{cases}
    o(r^{p/(p-1)}) &p\in (1, 3) \,,\\o( r^\frac{3}{2})& p \ge 3 \,,\\
\end{cases}
\end{equation}
 for \(r\to 0^+\).  
Del Teso and Lindgren approximate the Jacobian with the approxi\-mation of the \(p\)-Laplace operator \eqref{eq:disc p-lapl} to solve the \(p\)-Dirichlet problem using Newton's method in one dimension \cite{delTeso2021}.

We extend this numerical method from the one- to the two-dimensional case in order to have a robust solver for our recurrent \(p\)-Dirichlet problem.
We set the radius of the mean value approximation to \(r=0.2\).
We start with an initial guess of
\begin{equation}\tag{ex 1}\label{eq:num_ex_1}
    u^0(x_1, x_2) = -(1-2|x_1+0.5|)(1-2|x_2+0.5|)(1-|x_1|)(1-|x_2|)\,,
\end{equation}
see \Cref{fig:ipm_gs} top left, and initialize Newton's method to solve the inner problem with the current iterate \(u^k\).
We compute a maximum of $500$ Newton iterations or stop earlier if the solution solves the inner problem with an absolute error of \(10^{-12}\).
\begin{figure}[!htbp]
    \centering
    \includegraphics[width=\textwidth]{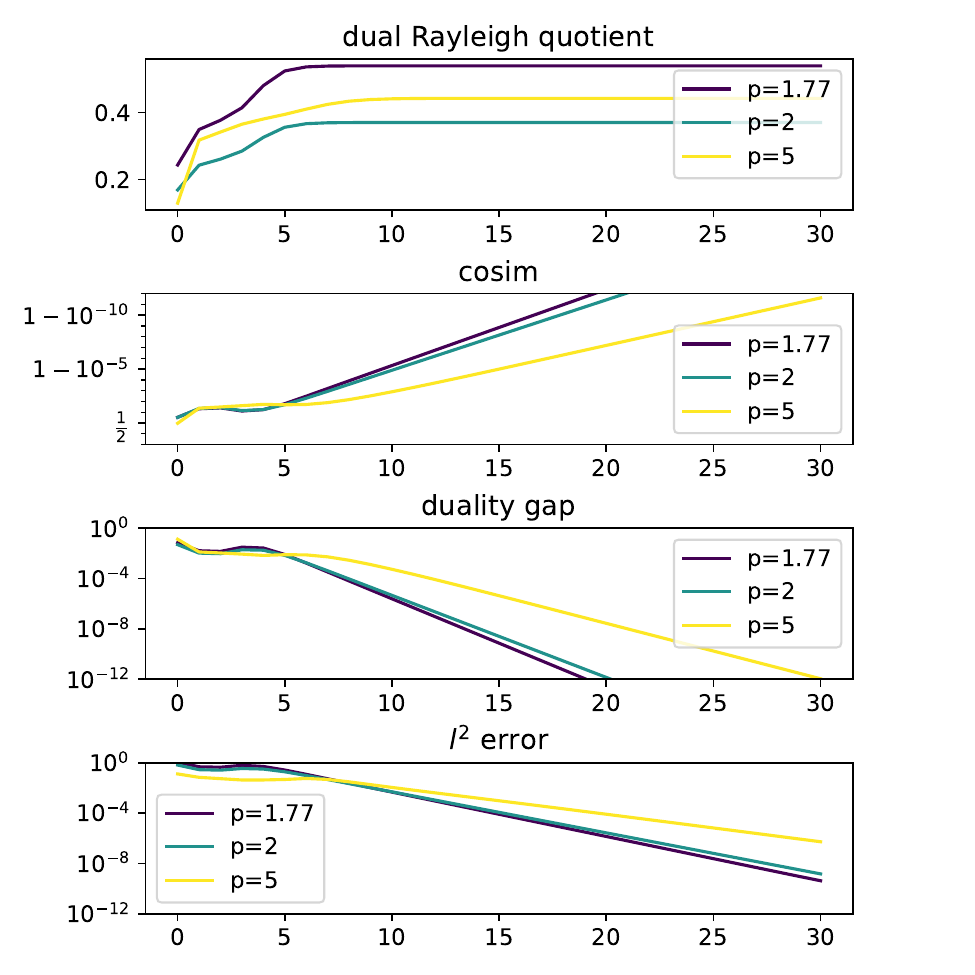}
    \caption{Convergence plots of the metrics of the inverse power method for different values of \(p\).
    The \(x\)-axis shows the number of iterations and the \(y\)-axis depicts the respective value of the metric.     We remark that the lower three plots use a logarithmic \(y\)-axis.
    From top  to down:  The dual Rayleigh quotient is shown to be monotonously increasing with respect to the iterates of the inverse power method.  $\operatorname{Cosim}$ is the cosine similarity, which becomes $1$ iff the iterate is a \(p\)-Laplace eigenvector, else it is less than one.
    Next, we illustrate the duality gap, which becomes zero if the iterate is an eigenvector and otherwise positive. Finally, we plot the \(l^2\)-error of the iterates fulfilling the eigenvector condition, which is zero iff the iterate is an eigenvector.    }
    \label{fig:ipm_conv}
\end{figure}
We perform $30$ steps of the inverse power method.
\begin{figure}[!htbp]
    \centering
    \includegraphics[width=\textwidth]{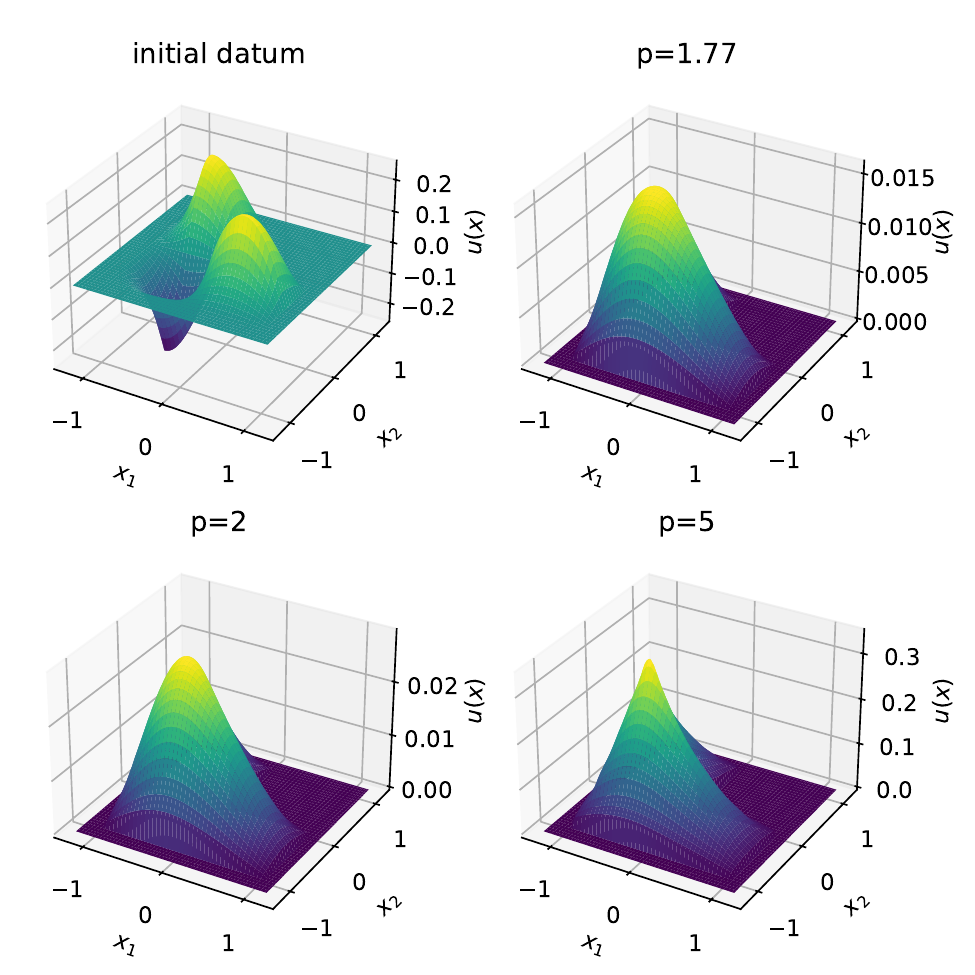}
    \caption{Initial guess (top left) and computed eigenvector for different values of \(p\), which is the ground state of the \(p\)-Laplacian on an L-shaped domain.
    }
    \label{fig:ipm_gs}
\end{figure}
In \Cref{fig:ipm_conv} we validate the behavior of the inverse power method and observe that it maximizes the dual RQ monotonously, as proven in \Cref{lemma:monotonicity_dRQ}, and converges towards an eigenfunction of the primal problem, cf.\ Theorem \ref{thm:convergence_ipm}.
The latter is verified by the behavior of our introduced metrics, i.e., the cosine similarity converging to one and the duality gap to zero.
The shown error depicts the \(l^2\)-error of the iterates inserted into the eigenproblem in the sense  \(\|\frac{ -\Delta_p(u^k)}{\|-\Delta_p(u^k)\|_q} - \frac{|u^k|^{p-2}(u^k)}{\||u^k|^{p-2}(u^k)\|_q}\|_2\).

It is important to observe that, in contrast to the monotone behavior of the dual RQ, the cosine similarity and  the duality gap are not optimized monotonously. Thus, we can state that the scheme does not converge towards the `closest' eigenfunction in the sense of the cosine similarity or the duality gap, but it maximizes the dual RQ until the scheme reaches a steady state.
We  observe in \Cref{fig:ipm_conv} and \Cref{fig:ipm_gs} that the inverse power method converges towards the ground state because it is the only nonsign-changing eigenfunction of the \(p\)-Laplacian \cite{lindqvist2008nonlinear}. This occurs even though the initial guess is changing its sign,   see \Cref{remark:groundstate_sign}.
We   observe   the characteristic change in steepness of the eigenfunction by varying the value of \(p\).

\subsection{Computation of higher-order eigenfunctions}
In the following we discuss approaches to compute higher-order eigenfunctions.
Our numerical examples for the computation  follow \cite[Example 7.3.]{bobkov2025inverseiterationmethodhigher}, i.e., we set \(p=3\) and use the initial guess 
\begin{equation}\tag{ex 2}\label{eq:num_ex_2}
u^0(x_1, x_2)=100 (x_1+1) (x_2+1) (x_1 - 1) (x_2 - 1) (0.0625 - x_1^2 - x_2^2)
\end{equation}
on \(\Omega = [-1, 1]^2\) discretized on a $100\times 100$ grid. The initial value is shown in \Cref{fig:local_minima_mv} on the left.
\subsubsection{Inverse power method for higher-order eigenfunctions}\label{subsect:inv_it_higher_order}
The computation of higher-order eigenfunctions with  an adaption of the inverse power method for the \(p\)-Laplacian eigenproblem was analyzed in \cite{bobkov2025inverseiterationmethodhigher}.
In their work, the authors balanced the positive and negative parts of the iterate \(\zeta^k\) such that the Rayleigh quotient of the positive and negative part of \(u^{k+\nicefrac{1}{2}}\) are equal.
 The motivation behind their approach is that the second eigenfunction of the \(p\)-Laplace operator has this balancing property \cite{Bobkov2014}. 
Their proposed iteration scheme decreases the RQ, and they proved convergence of their scheme to an eigenfunction of the \(p\)-Laplacian. In their numerical experiments, they observed a plateauing effect of the RQ of the iterates in the sense that the iteration remains almost unchanged on an approximation of a higher-order eigenfunction.
After a few iterations this almost steady state of the iteration scheme becomes unstable and the same situation repeats close to another eigenfunction, until it converges towards the second eigenfunction of the \(p\)-Laplacian.

\begin{figure}[!htbp]
    \centering
    \includegraphics[width=\textwidth]{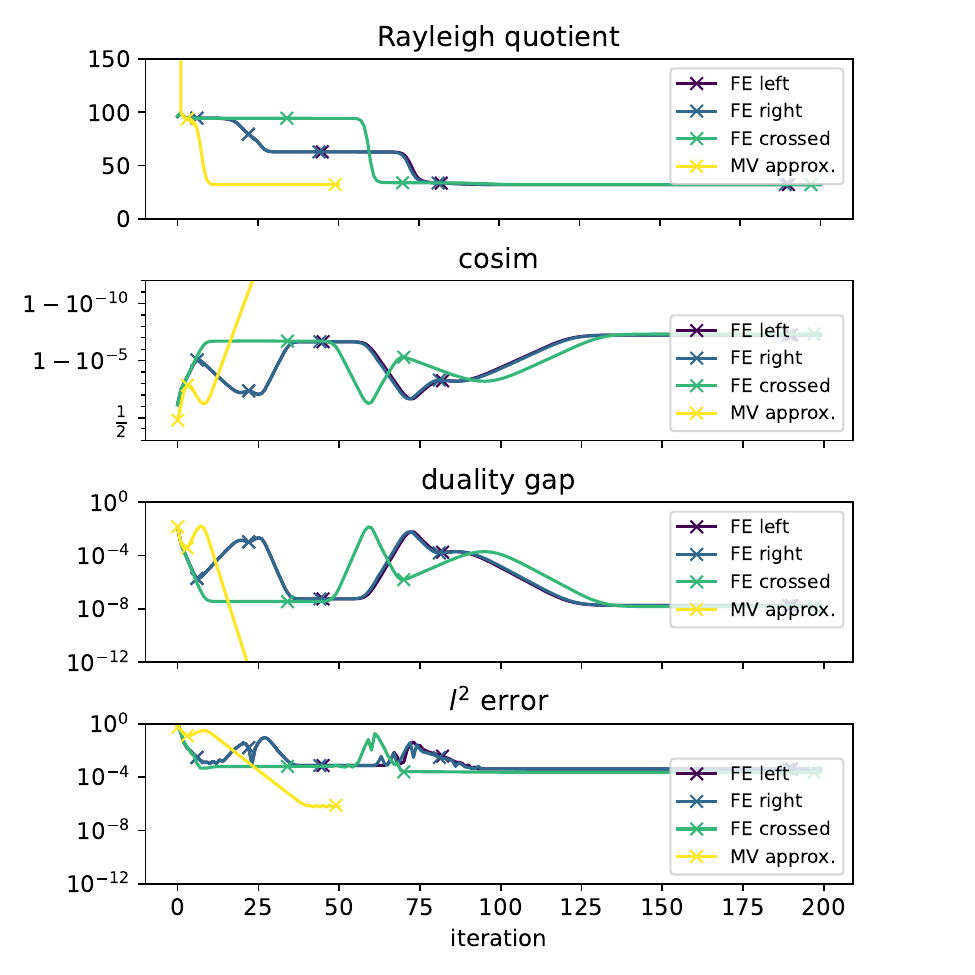}
    \caption{Evaluation of the power method for higher-order eigenfunctions.
    We evaluate the scheme with the same three  metrics as in \Cref{subsect:num_val_ipm}.
    The four plots depict the Rayleigh quotient value (top), the cosine similarity (upper center), duality gap (lower center), and \(l^2\)-error (bottom) of the iterates at every iteration.
    In the top plot, we can see that our proposed mean value approximation approach converges faster and more accurate than the finite element approaches \cite{bobkov2025inverseiterationmethodhigher} to the second smallest eigenfunction without plateauing.
    Furthermore, the `crossed' mesh, which yields a better domain tesselation, has one plateau less than the `right' and `left' mesh.
    The  values marked with `$x$' are the iterates which are local maxima of the cosine similarity and local minima of the duality gap at the same time. 
    These iterates are plotted in \Cref{fig:local_minima_fl}, \ref{fig:local_minima_fr}, and \ref{fig:local_minima_fc}.}
    \label{fig:higher_order_conv}
\end{figure}
Their implementation used the FEniCS \cite{LoggEtal2012Fenics, LoggWells2010dolfin} finite element (FE) nonlinear solver based on Newton's method for the \(p\)-Poisson problem and they used the SciPy \cite{2020SciPy-NMeth} riddler root-finding algorithm to balance the Rayleigh quotients of the positive and negative parts of \(u^{k+\nicefrac{1}{2}}\).
The authors explained the plateauing behavior of the iteration as numerical errors and assumed that with a more advanced solver of the Poisson problem the iterations would stay at the first plateau.
\begin{figure}[!htbp]
    \centering
    \includegraphics[width=\textwidth]{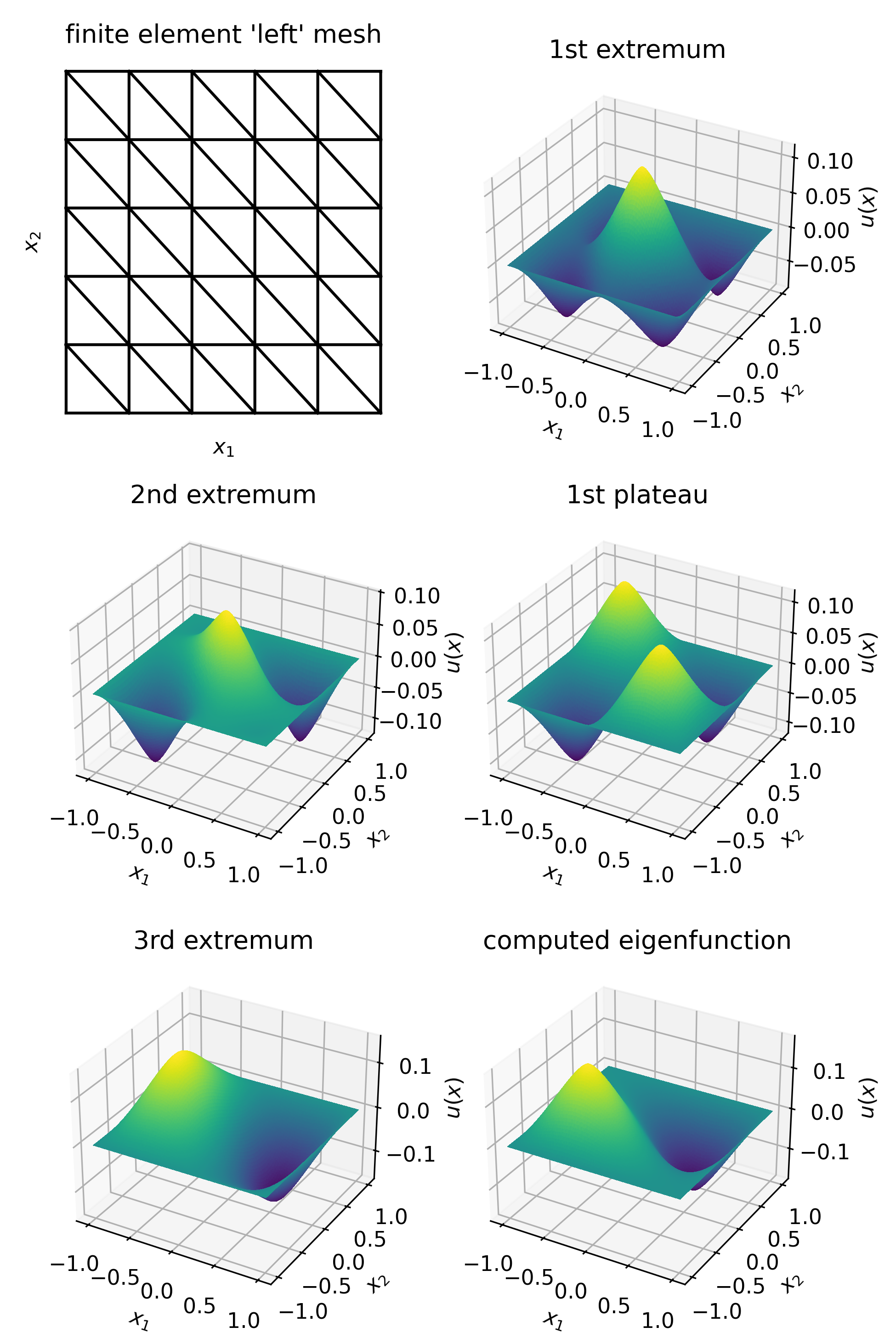}
    \caption{Top left: Mesh discretization of the finite element approximation.
    The remaining plots show the iterates marked in \Cref{fig:higher_order_conv}, which are the local extrema of the duality gap and cosine similarity of the iteration scheme.
    The first extremum coincides with the extremum of the mean value approximation.
    The second extremum only occurs in the `left' and `right' discretization. Also the first plateau appears only for  the `left' and `right' discretization. The third extremum is the final computed eigenfunction, which gets rotated by the algorithm due to the geometry of the domain and the finite elements by 90 degrees.}
    \label{fig:local_minima_fl}
\end{figure}
\begin{figure}[!htbp]
    \centering
    \includegraphics[width=\textwidth]{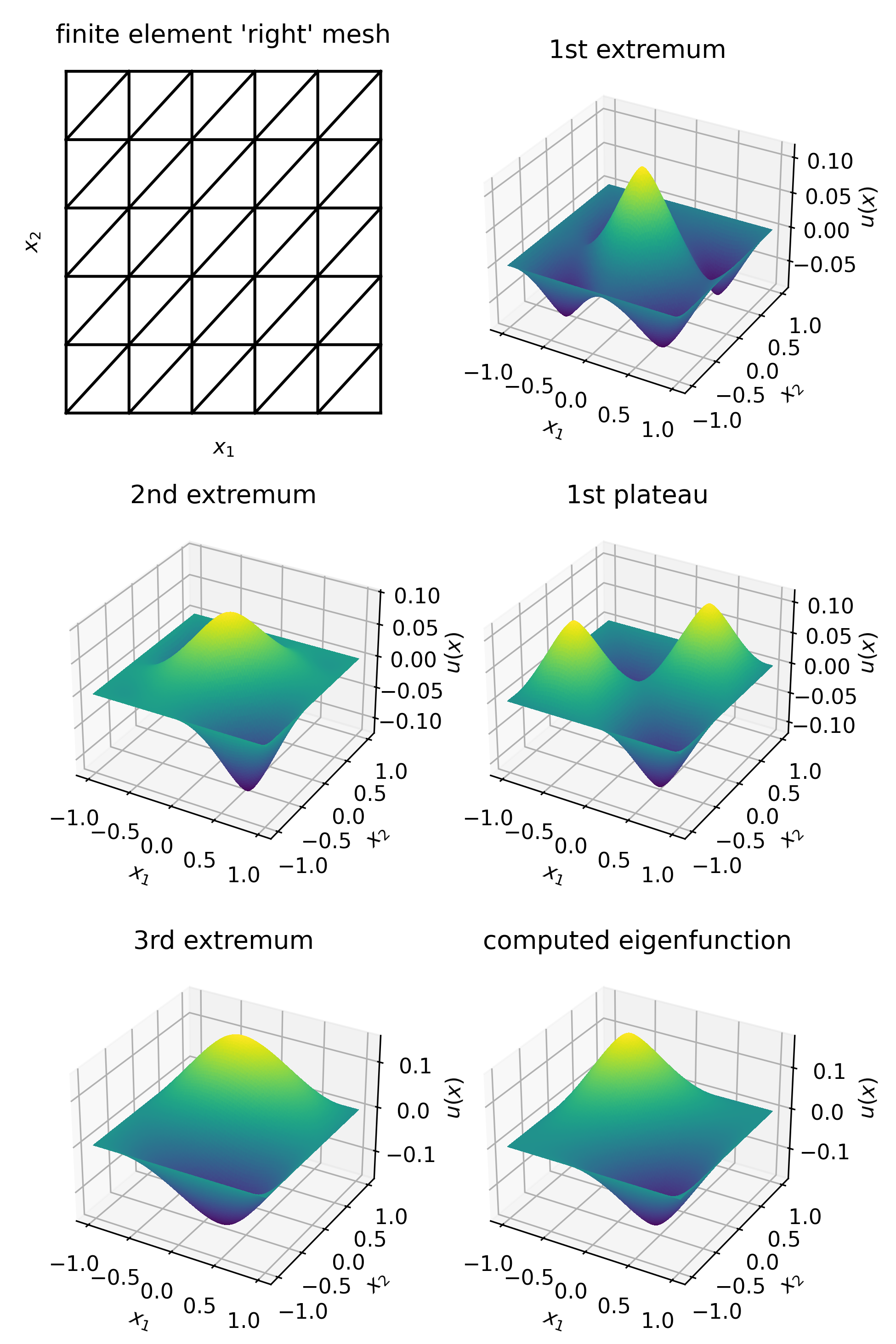}
    \caption{Top left: Mesh discretization of the finite element approximation.
    The remaining plots show the iterates marked in \Cref{fig:higher_order_conv}, which are the local extrema of the duality gap and cosine similarity of the iteration scheme.
    We observe that the extrema and plateau iterates are the same as in \Cref{fig:local_minima_fl}, but rotated by $90$ degrees.}
    \label{fig:local_minima_fr}
\end{figure}
\begin{figure}[!htbp]
    \centering
    \includegraphics[width=\textwidth]{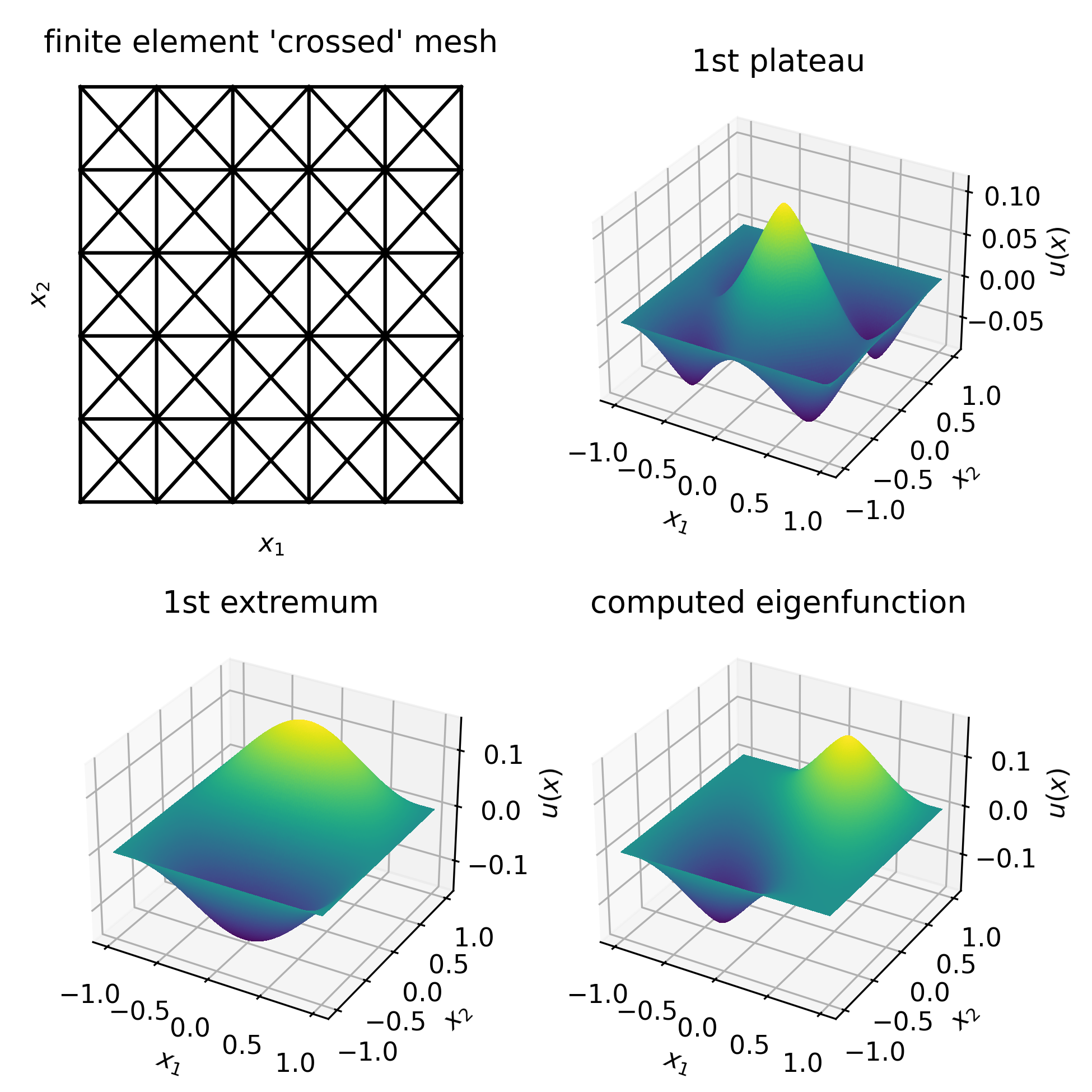}
    \caption{Top left: Mesh discretization of the finite element approximation.
    The remaining plots show the iterates marked in \Cref{fig:higher_order_conv}, which are the local extrema of the duality gap and cosine similarity of the iteration scheme.
    The first plateau is the same as the first extremum from the other schemes.
    This approach plateaus at this local extrema due to the geometry of the finite elements, which match the shape of the iterate.
    As with the other finite elements approaches, the last extremum is the final computed eigenfunction, but rotated by $90$ degrees in this case.}
    \label{fig:local_minima_fc}
\end{figure}

We implemented the proposed method in \cite{bobkov2025inverseiterationmethodhigher} with different mesh discretizations and could reproduce the plateauing behavior with the FE approach, see \Cref{fig:higher_order_conv} for the plateauing of the Rayleigh quotient value. We also implemented their algorithm with the del Teso and Lindgren finite difference solver (MV) of the \(p\)-Poisson problem based on the mean value approximation of the \(p\)-Laplacian  with radius \(r^{\frac{3}{2} + 0.1} = h\) (to conform with the theoretical consistency \eqref{eq:consistency}, see \cite{delTeso2021}) and the SciPy riddler root finding algorithm to balance the Rayleigh quotients of the positive and negative parts of \(u^{k+\nicefrac{1}{2}}\). We computed $200$ iterations for the FE approach and $50$ iterations for the finite difference method.

\Cref{fig:higher_order_conv} shows the convergence plot of our metrics for the different implemen\-tations. 
In \Cref{fig:local_minima_fl}, \ref{fig:local_minima_fr}, and \ref{fig:local_minima_fc} the almost steady states are shown.
We observe that, by changing the FE mesh, the number of iterations for which the scheme reaches plateaus changes.
For the coarser `left' and `right' mesh, the first plateau of the scheme in \Cref{fig:local_minima_fl} and \Cref{fig:local_minima_fr} is an approximation of a higher-order eigenfunction.
On the other hand, the first plateau of the finer `crossed' mesh \Cref{fig:local_minima_fc} is only a local extremum in terms of the cosine similarity and the duality gap whereas the `left' and `right', as well as the MV approach, do not stall at this plateau.
Furthermore, we observe that the solutions at this plateau seem to be a superposition of different eigenfunctions since they are only local extrema of the metrics for the MV solver.

The observation that the extrema and plateau iterates in \Cref{fig:local_minima_fr} are the same as in \Cref{fig:local_minima_fl} up to a rotation by $90$ degrees is due to the radial symmetry of the initial guess in combination with the $90$ degrees rotation of the `right' finite elements in comparison to the `left' mesh.
We deduce from this observation that the geometry of the finite elements influences the scheme and thus its local extrema.
With the MV implementation we do not observe any plateauing behavior of the RQ and we even need fewer iterations for the same amount of nodes to converge towards the same eigenfunction of the \(p\)-Laplacian, see \Cref{fig:local_minima_mv} and \ref{fig:higher_order_conv}. We expect this eigenfunction to be the second eigenfunction of the \(p\)-Laplacian since it changes its sign only once and satisfies the RQ balancing property for the positive and negative parts of the eigenfunction.
\begin{figure}[!htbp]
    \centering
    \includegraphics[width=\textwidth]{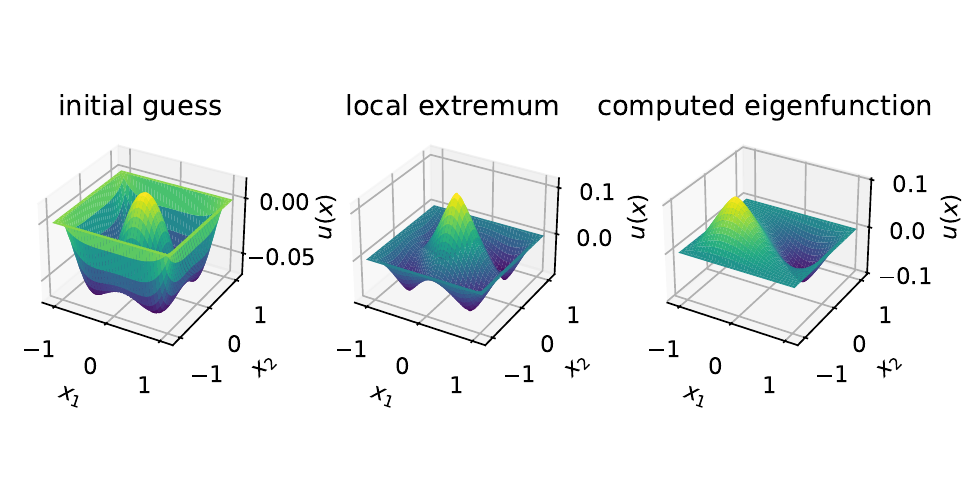}
    \caption{Left: Initial guess. Center: Iterate which fulfills the local minimum of the duality gap of the iteration from \Cref{fig:higher_order_conv} with the MV solver. Right: The final computed eigenfunction. We observe that the iteration is not plateauing, but always strictly decreasing the Rayleigh quotient.}
    \label{fig:local_minima_mv}
\end{figure}

We conclude this subsection by stating that the MV solver outperforms the FE approach with a faster convergence, a more accurate approximation, and without unwanted plateauing effects. We could also observe that the plateauing effect is not necessarily happening due to the proximity to a nonlinear eigenfunction, but could also happen due to a superposition of various eigen\-functions.
Furthermore, the plateau depends on the initial guess as observed in \cite{bobkov2025inverseiterationmethodhigher} and on the used FE mesh discretization.
\subsubsection{Higher-order eigenfunctions via geometric characterization}
The inverse power method and its adaptation for the computation of higher-order eigenfunction  in \cite{bobkov2025inverseiterationmethodhigher} provide theoretical guarantees to optimize the   dual or primal Rayleigh quotient   monotonically and to converge towards a nonlinear eigenfunction.  In the numerical experiments described above, we observe that other  metrics, like the cosine similarity or the duality gap, are not being optimized monotonically.
Furthermore, we see that the schemes based on the optimization of the RQ as a target function converge for a broad class of initial guesses to the same eigenfunction.
This eigenfunction is thus not necessarily the `closest' in terms of the cosine similarity or the duality gap to the initial guess. For absolutely \(p\)-homogeneous \(J\), given  \(\zeta\in \partial J(u)\), the  duality gap  reads as \(g(u,\zeta) = (1-\operatorname{cosim}(u,\zeta))R^{-\nicefrac{1}{p}}(u)\)  since the inequality in \Cref{lemma:RQlemma} is an equality (see comment preceding Proposition \Cref{prop:roots_dgap}). Thus,  the duality gap is directly correlated to the cosine similarity measure.

In the following, we disregard the idea of optimizing the primal or dual RQ directly, but instead we aim to optimize the cosine similarity of \(u\) and \(\zeta\in\partial J(u)\)  in order to compute higher-order nonlinear eigenfunctions.

For absolutely \(p\)-homogeneous functionals, the geometric characterization of their respective eigenvectors based on \Cref{prop:geom_charact} (see also \eqref{maximality} and \Cref{the homogeneous case}) reduces to the angular condition
\[\operatorname{cosim}(u,\zeta) = \frac{\langle \zeta, u\rangle}{|\zeta|_{H^*}|u|_H} = 1\quad \text{and }\zeta\in\partial J(u)\,.\]
Recall by \eqref{eq:euler_ident} that \(0\leq pJ(u) = \langle \zeta, u\rangle\leq |\zeta|_{H^*}|u|_H\)
for \(\zeta\in\partial J(u)\). Under the extra assumption that the functional \(J\) is differentiable, we formulate the following minimization problem:
\begin{align}\label{target function}
  \min_{u\in X}\,F(u) \quad \text{for} \quad F(u) := 1-\operatorname{cosim}(u,\partial J(u)) = 1-\frac{pJ(u)}{|u|_H|\partial J(u)|_{H^*}}. 
\end{align}
Now, due to  \Cref{prop:geom_charact}, the solution to this problem is indeed a nonlinear eigenfunction.
A numerical approach to minimize this nonconvex target function is a gradient flow discretization, for which we need to assume that the norms \(|\cdot|_H\) and \(|\cdot|_{H^*}\) are differentiable on the trajectory of the flow and \(J\) needs to be twice differentiable on the trajectory. The gradient flow is given by
\begin{equation*}
    \partial H(u'(t)) = -\nabla F(u(t))\,.
\end{equation*}
We choose a semi-implicit discretization of this flow, where we obtain a left-hand side \(\partial H(\frac{u^{k+1}-u^k}{\tau})\) and a right-hand side
\begin{equation*}\label{eq:gd_gradient}
   \frac{p\partial J(u^{k+1}) - \operatorname{cosim}(u^k,\partial J(u^k))\left[|u^k|_H^{'} |\partial J(u^k)|_{H^*} + \partial^2 J(u^k)|\partial J(u^k)|_{H^*}^{'}|u^k|_H\right]}{|u^k |_H|\partial J(u^k)|_{H^*}}
\,.
\end{equation*}
We normalize every iterate of the scheme.
We expect the scheme to increase the cosine similarity and thus to converge towards an eigenvector  or a local extremum.
The proposed scheme should converge towards the `closest' extremum in terms of the cosine similarity since it should increase the cosine similarity monotonously, in contrast to the previously discussed inverse power methods in \Cref{subsect:num_val_ipm} and \Cref{subsect:inv_it_higher_order}.
Clearly, the resulting approximation strongly depends on the chosen initial guess, so depending on it we can reach different local extrema of interest.
We analyze the numerical behavior of this scheme by applying it to the \(p\)-Laplacian eigenproblem.

\subsubsection*{Numerical implementation}

For numerical experiments on the \(p\)-Laplacian we have to approximate the Jacobian of the \(p\)-Laplacian.
As previously, we use the finite difference formulation by del Teso and Lindgren \cite{delTeso2021}.

We are using the same $100\times 100$ grid discretization with spatial step width \(h=0.02\) as in \Cref{subsect:inv_it_higher_order}. We choose a wider radius for the mean value approximation \(r=h^{0.5}\) for the \(p\)-Laplacian with \(p=3\) because our discretization needs to approximate the Jacobian of the \(p\)-Laplacian as well.
The step size is computed with a line search approach and we use Newton's method to resolve the semi-implicit update step.
The results of this method on the previous examples \eqref{eq:num_ex_1} and \eqref{eq:num_ex_2} are given in \Cref{fig:gradient_descend_results}. In \Cref{fig:gradient_descend_conv} we can see that the scheme increases the cosine similarity monotonously and decreases the duality gap and the error monotonously as well.
This approach converges very fast in less than \(10\) iterations for our examples. For the first example, we observe that the iteration scheme converges towards an eigenfunction and keeps the characteristics of the initial guess. This eigenfunction seems to be a higher eigenfunction since it changes its sign twice.
In the second example, we do not converge towards an eigenfunction, but a local extremum of the cosine similarity. This local extremum is the same superposition of eigenfunctions as in the previous approach in \Cref{subsect:inv_it_higher_order}. As a consequence, convergence towards an eigenfunction is not  guaranteed with this approach. Nonetheless, in \eqref{eq:num_ex_1} we are able to compute an eigenfunction, which is neither the first nor the second.

\begin{figure}[!htbp]
    \centering
    \includegraphics[width=\textwidth]{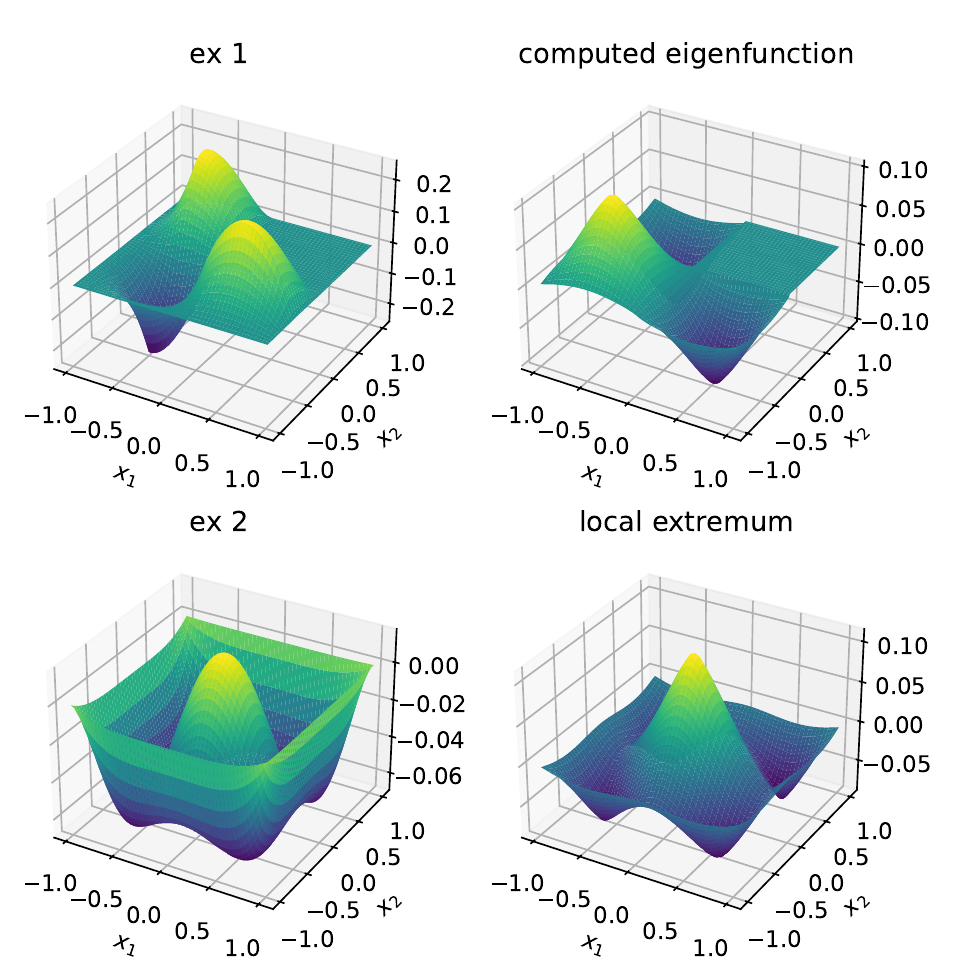}
    \caption{
       Top row: Initial guess from \eqref{eq:num_ex_1} and computed eigenfunction with the method based on the geometric characterization of eigenvectors. 
    Bottom row: Initial guess from \eqref{eq:num_ex_2} and computed local extremum of the target function, which is the same as in \Cref{subsect:inv_it_higher_order}.
    }
    \label{fig:gradient_descend_results}
\end{figure}
\begin{figure}[!htbp]
    \centering
    \includegraphics[width=\textwidth]{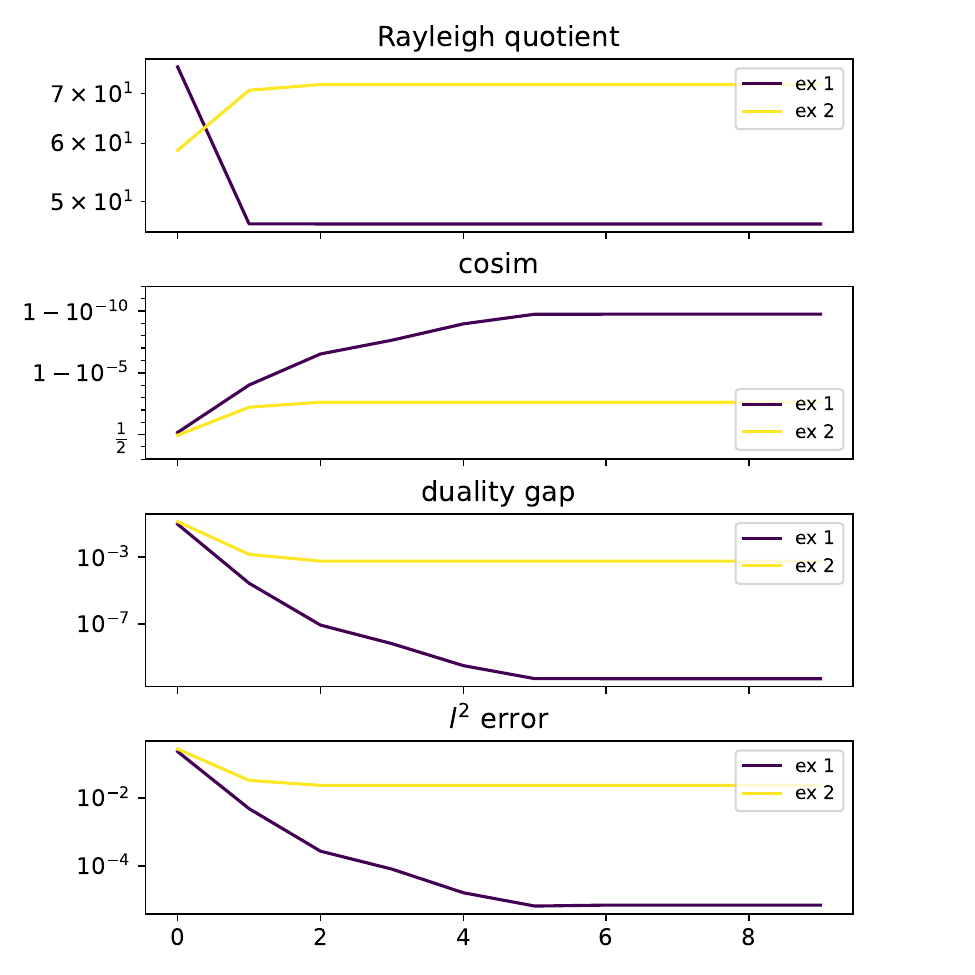}
    \caption{
       Evaluation of the method based on the geometric characterization. We evaluate the scheme with the same three  metrics as in \Cref{subsect:num_val_ipm}.
    The top plot depicts the Rayleigh quotient value of the iteration scheme which, as we see, behaves not monotonously.
    The other metrics improve monotonously. The  second example does not converge to an eigenfunction, while the first one does.
    }
    \label{fig:gradient_descend_conv}
\end{figure}

\section{Conclusion}\label{sect:Conclusion}
We conclude this work by summarizing our key findings.

In the first part of this work, we analyzed the nonlinear eigenproblem in an abstract setting for proper, closed, convex functionals in reflexive Banach spaces.
Our first main result was the equivalence of the nonlinear eigenproblem with a dual formulation in the sense that, for \(\zeta\in\partial J(u)\), \(u\) is an  eigenvector if and only if \(\zeta\) is a dual eigenvector, see \Cref{thm:duality}.
The dual problem is interpreted as the nonlinear eigenproblem of the inverse operator \(\partial J^*\) of the primal operator \(\partial J\).
This interpretation in combination with the eigenvector duality allowed us to interpret the inverse power method as a dual power method and enabled us to prove convergence towards nonlinear eigenvectors.
Further theoretical results concerned a duality gap whose roots are given by a primal dual nonlinear eigenvector pair, as well as a  novel geometric characterization of nonlinear eigenvectors. Both concepts were used as metrics for the numerical computation to observe convergence of the presented schemes towards nonlinear eigenvectors. We also used the geometric characterization to build a target function (see \eqref{target function}) in order to optimize for the computation of higher-order eigenfunctions.

Our findings in the numerical experiments showed that the inverse power method converges fast and with high accuracy towards the ground-state eigenfunction for a broad array of initial guesses.
The inverse power method for the computation of higher-order eigenfunctions computes the second eigenfunction without plateauing and with fewer iterations when using the finite difference mean value approximation solver of the \(p\)-Laplacian.
Our proposed numerical approach based on the geometric characterization  converges very fast and preserves the shape of the initial guess. However, this approach does not guarantee convergence   towards a nonlinear eigenfunction, as we could show numerically. Future work will address adaptations of the approach based on the geometric characterization, to seek for convergence guarantees towards nonlinear eigenfunctions. Further research can be done on the initial guess for this method. We believe that eigenfunctions of the \(2\)-Laplacian could be well suited as initial guesses for the computation of corresponding higher order \(p\)-Laplace eigenfunctions.
\paragraph{Acknowledgments} We thank Leon Bungert for   fruitful discussions   on the content of this paper.
\bibliographystyle{plainurl} 
\bibliography{notes_bib.bib}
\end{document}